\newtheorem{theorem}{Theorem}[section]
\newtheorem{corollary}[theorem]{Corollary}
\begin{document}

\title[Generalized fractional integration of $k$-Bessel function]{Generalized fractional integration of $k$-Bessel function}

\author{G.  Rahman}
\address{Department of Mathematics, International Islamic  University, City Islamabad, Country Pakistan.}
\email{gauhar55uom@gmail.com}

\author{K.S. Nisar$^{\ast}$}
\address{Department of Mathematics,College of Arts and Science-Wadi Al dawaser, Prince Sattam bin Abdulaziz University, Saudi Arabia.}
\email{ksnisar1@gmail.com}

\author{S. Mubeen}
\address{Department of Mathematics, University of Sargodha,   City Sargodha, Country Pakistan.}
\email{smjhanda@gmail.com}

\author{M. Arshad}
\address{Department of Mathematics, International Islamic  University, City Islamabad, Country Pakistan.}
\email{marshad\_zia@yahoo.com}
%%%%%%%%%%%%%%%%%%%%%%%%%%%%%%%%%%%%%%%%%%%%%%%%%%%%%%%%%%%%%%%%%%%

\begin{abstract}
In this present paper our aim is to deal with two integral transforms  which involving the Gauss hypergeometric function as its kernels.   We prove some compositions formulas for such a generalized  fractional integrals with $k$-Bessel function. The results are established in terms of generalized Wright type hypergeometric  function and generalized hypergeometric series. Also,  the authors presented some corresponding assertions for Riemann–Liouville  and Erd\'{e}lyi–Kober fractional integral transforms.
\vspace{2mm}

\noindent\textsc{2010 Mathematics Subject Classification.} 33C20, 33C05, 33C10, 26A33, 26A09.

\vspace{2mm}

\noindent\textsc{Keywords and phrases.} fractional integral operator, Bessel function, generalized Wright function, generalized hypergeometric function.

\end{abstract}

\thanks{*corresponding author}

%%%%%%%%%%%%%%%%%%%%%%%%%%%%%%%%%%%%%%%%%%%%%%%%%%%%%%%%%%%%%%%%%%%

\maketitle

%%%%%%%%%%%%%%%%%%%%%%%%%%%%%%%%%%%%%%%%%%%%%%%%%%%%%%%%%%%%%%%%%%%

\section {Introduction and Preliminaries}

%%%%%%%%%%%%%%%%%%%%%%%%%%%%%%%%%%%%%%%%%%%%%%%%%%%%%%%%%%%%%%%%%%%

The Gauss hypergeometric function is defined as:
\begin{equation}\label{1}
_2F_1(a,b;c;z)=\sum\limits_{n=0}^{\infty}\frac{(a)_n(b)_n}{(c)_n}\frac{z^n}{n!},
\end{equation}
where $a,b,c \in\mathbb{C}$, $c\neq0,-1,-2,\cdots$ and $(\lambda)_n$ is the Pochhammer symbol defined for $\lambda\in\mathbb{C}$ and $n\in\mathbb{N}$ as:
\begin{equation}\label{2}
(\lambda)_0=1, \qquad (\lambda)_n=\lambda(\lambda+1)(\lambda+2)\cdots(\lambda+n-1); n\in\mathbb{N}.
\end{equation}

The series defined in (\ref{1}) is absolutely convergent for $|z|<1$ and $|z|=1$ (see \cite{Erd1953}).
Saigo \cite{Saigo1978} introduced the following left and right sided generalized integral transforms defined for $x>0$ respectively as:

\begin{align}
&\left(I_{0+}^{\alpha, \beta, \eta}f\right)(x)=\frac{x^{-\alpha-\beta}}{\Gamma(\alpha)}\notag\\
&\times\int\limits_{0}^{x}(x-t)^{\alpha-1}
\quad_2F_1\left(
\begin{array}{c}
\alpha+\beta,-\eta;\alpha;1-\frac{t}{x}\\
\end{array}
\right)f(t)dx, \label{3}
\end{align}
and
\begin{align}\label{4}
&\left(I_{-}^{\alpha, \beta, \eta}f\right)(x)=\frac{1}{\Gamma(\alpha)}\notag\\
&\times\int\limits_{x}^{\infty}(x-t)^{\alpha-1}t^{-\alpha-\beta}
\quad_2F_1\left(
\begin{array}{c}
\alpha+\beta,-\eta;\alpha;1-\frac{x}{t} \\
\end{array}
\right)f(t)dx,
\end{align}
where $\alpha, \beta, \eta \in\mathbb{C}$ and $\mathbb{R}e(\alpha)>0$ and $_2F_1(a,b;c;z)$ is Gauss hypergeometric function defined in (\ref{1}). When $\beta=-\alpha$, then (\ref{2}) and (\ref{4}) will lead to the classical Riemann-Liouville left and right-sided fractional integrals of order $\alpha\in\mathbb{C}$, $\mathfrak{R}(\alpha)>0$, (see \cite{Samko1993}):
\begin{eqnarray}\label{5}
\left(
  \begin{array}{c}
    I_{0+}^{\alpha, \beta, \eta}f \\
  \end{array}
\right)(x)=\frac{x^{-\alpha-\beta}}{\Gamma(\alpha)}\int\limits_{0}^{x}(x-t)^{\alpha-1}
f(t)dx (x>0),
\end{eqnarray}
and
\begin{eqnarray}\label{6}
\left(
  \begin{array}{c}
    I_{0+}^{\alpha, \beta, \eta}f \\
  \end{array}
\right)(x)=\frac{1}{\Gamma(\alpha)}\int\limits_{x}^{\infty}(x-t)^{\alpha-1}t^{-\alpha-\beta}f(t)dx (x>0).
\end{eqnarray}
If $\beta=0$, then equations (\ref{3}) and (\ref{4}) will reduce to the well known Erd\'{e}lyi-Kober fractional defined as:
\begin{eqnarray}\label{7}
\left(
  \begin{array}{c}
    I_{0+}^{\alpha, 0, \eta}f \\
  \end{array}
\right)(x)=\left(
  \begin{array}{c}
    K_{\eta,\alpha}^{+}f \\
  \end{array}
\right)(x)=\frac{x^{-\alpha-\beta}}{\Gamma(\alpha)}\int\limits_{0}^{x}(x-t)^{\alpha-1}
t^\eta f(t)dx
\end{eqnarray}
and
\begin{eqnarray}\label{8}
\left(
  \begin{array}{c}
    I_{0+}^{\alpha, 0, \eta}f \\
  \end{array}
\right)(x)=\left(
  \begin{array}{c}
    K_{\eta, \alpha}^{-}f \\
  \end{array}
\right)(x)=\frac{x^\eta}{\Gamma(\alpha)}\int\limits_{x}^{\infty}(x-t)^{\alpha-1}t^{-\alpha-\eta}
f(t)dx,
\end{eqnarray}
where $\alpha, \eta\in \mathbb{C}$, $\mathfrak{R}(\alpha)>0$ (see \cite{Samko1993}).\\
The generalized $k$-Bessel function  defined in \cite{Saiful2016} as:
\begin{eqnarray}\label{9}
W_{v, c}^{k}(z)=\sum\limits_{n=0}^{\infty}\frac{(-c)^n}{\Gamma_k(nk+v+k)n!}(\frac{z}{2})^{2n+\frac{v}{k}},
\end{eqnarray}
where $k>0$, $v>-1$, and $c\in\mathbb{R}$ and $\Gamma_k(z)$ is the $k$-gamma function defined in \cite{Diaz2007} as:
 \begin{eqnarray}
 \Gamma_k(z)=\int\limits_{0}^{\infty}t^{z-1}e^{-\frac{t^k}{k}}dt, z\in\mathbb{C}.
 \end{eqnarray}
 By inspection the following relation holds:
 \begin{eqnarray}
 \Gamma_k(z+k)=z\Gamma_k(z)
 \end{eqnarray}
 and
 \begin{eqnarray}
 \Gamma_k(z)=k^{\frac{z}{k}-1}\Gamma(\frac{z}{k}).
 \end{eqnarray}
 If $k\rightarrow 1$ and $c=1$, then the generalized $k$-Bessel function defined in (\ref{9}) reduces to the well known classical Bessel function $J_v$ defined in \cite{Erdelyi1953}. For further detail about $k$-Bessel function and its properties (see \cite{Gehlot2014}-\cite{Gehlot2016}). \\
The generalized hypergeometric function $_pF_{q}(z)$ is defined in \cite{Erd1953} as:
$$_pF_{q}(z)=\quad_pF_{q,}
                                 \left[
                                   \begin{array}{ccc}
                                     (\alpha_1),(\alpha_2),\cdots(\alpha_p) &  &  \\
                                      &  & ;z \\
                                     (\beta_1),(\beta_2),\cdots(\beta_q) &  &  \\
                                   \end{array}
                                 \right]$$
\begin{eqnarray}\label{10}
=\sum\limits_{n=0}^{\infty}\frac{(\alpha_1)_{n}(\alpha_2)_{n}\cdots(\alpha_p)_{n}}
{(\beta_1)_{n}(\beta_2)_{n}\cdots(\beta_q)_{n}}\frac{z^{n}}{n!},
\end{eqnarray}
 where $\alpha_i, \beta_j\in\mathbb{C}$; $i=1,2,\cdots,p$, $j=1,2,\cdots,q$ and $b_j\neq0, -1, -2,\cdots$
 and $(z)_{n}$ is the Pochhammer symbols. The gamma function is defined  as:
 \begin{eqnarray}
 \Gamma(\mu)=\int\limits_{0}^{\infty}t^{\mu-1}e^{-t}dt, \mu\in\mathbb{C},
 \end{eqnarray}
 \begin{eqnarray}\label{29}
 \Gamma(z+n)=(z)_{n}\Gamma(z), z\in\mathbb{C},
\end{eqnarray}
and beta function is defined as:
\begin{eqnarray}\label{w}
B(x,y)=\int\limits_{0}^{1}t^{x-1}(1-t)^{y-1}dt.
\end{eqnarray}
Also, the following identity of Gauss hypergeometric function holds:
\begin{eqnarray}\label{30}
_2F_{1}(a, b;c;1)=\frac{\Gamma(c)\Gamma(c-a-b)}{\Gamma(c-a)\Gamma(c-b)}; \mathfrak{R}(c-a-b)>0,
\end{eqnarray}
(see \cite{Erd1953}, \cite{Samko1993}).\\
The Wright type hypergeometric function  is defined  (see \cite{Wright1935}-\cite{Wrigt1935}) by the following series as:
$$_p\Psi_{q}(z)=\quad_p\Psi_{q}
                                 \left[
                                   \begin{array}{ccc}
                                     (\alpha_i, A_i)_{1,p} &  &  \\
                                      &  & ;z \\
                                     (\beta_j, B_j)_{1,q} &  &  \\
                                   \end{array}
                                 \right]$$
\begin{eqnarray}\label{11}
&=&\sum\limits_{n=0}^{\infty}\frac{\Gamma(\alpha_{1}+A_{1}n)\cdots\Gamma(\alpha_{p}+A_{p}n)}{\Gamma(\beta_{1}+B_{1}n)\cdots
\Gamma(\beta_{q}+B_{q}n)}\frac{z^{n}}{n!}
\end{eqnarray}
where $\beta_{r}$ and $\mu_{s}$  are real positive numbers such that
\begin{eqnarray*}
1+\sum\limits_{s=1}^{q}\beta_{s}-\sum\limits_{r=1}^{p}\alpha_{r}>0.
\end{eqnarray*}

Equation (\ref{11}) differs from the generalized hypergeometric function $_{p}F_{q}(z)$ defined (\ref{10})  only by a constant multiplier. The generalized hypergeometric function $_{p}F_{q}(z)$ is a special case of $_p\Psi_{q}(z)$ for $A_i=B_j=1$, where $i=1,2,\cdots,p$ and $j=1,2,\cdots,q$:
\begin{align}
\frac{1}{\prod\limits_{j=1}^{q}\Gamma(\beta_j)}\quad_pF_{q}
                                 \left[
                                   \begin{array}{ccc}
                                     (\alpha_1),\cdots(\alpha_p) &  &  \\
                                      &  & ;z \\
                                     (\beta_1),\cdots(\beta_q) &  &  \\
                                   \end{array}
                                 \right]=\frac{1}{\prod\limits_{i=1}^{p}\Gamma(\alpha_i)}\quad_p\Psi_{q}
                                 \left[
                                   \begin{array}{ccc}
                                     (\alpha_i, 1)_{1,p} &  &  \\
                                      &  & ;z \\
                                     (\beta_j, 1)_{1,q} &  &  \\
                                   \end{array}
                                 \right].\label{12}
\end{align}
For various properties of this function see \cite{Kilb2002}.\\
\textbf{Lemma 1.1} (A. A. Kilbas and N. Sebastian \cite{Kilbas2008}) Let $\alpha, \beta, \eta\in\mathbb{C}$, $\mathfrak{R}(\alpha)>0$ and $\mathfrak{\lambda}>\max[0, \mathfrak{\beta-n}]$, then the following relation holds:
\begin{eqnarray}\label{13}
\left(
  \begin{array}{c}
    I_{0+}^{\alpha, \beta, \eta}t^{\lambda-1} \\
  \end{array}
\right)(x)=\frac{\Gamma(\lambda)\Gamma(\lambda+\eta-\beta)}{\Gamma(\lambda-\beta)\Gamma(\lambda+\alpha+\eta)}x^{\lambda-\beta-1}
\end{eqnarray}
\textbf{Lemma 1.2} (A. A. Kilbas and N. Sebastian \cite{Kilbas2008}) Let $\alpha, \beta, \eta\in\mathbb{C}$, $\mathfrak{R}(\alpha)>0$ and $\mathfrak{\lambda}>\max[0, \mathfrak{\beta-n}]$, then the following relation holds:
\begin{eqnarray}\label{14}
\left(
  \begin{array}{c}
    I_{-}^{\alpha, \beta, \eta}t^{\lambda-1} \\
  \end{array}
\right)(x)=\frac{\Gamma(\eta-\lambda+1)\Gamma(\beta-\lambda+1)}{\Gamma(1-\lambda)\Gamma(\alpha+\beta+\eta-\lambda+1)}x^{\lambda-\beta-1}
\end{eqnarray}
In the same paper, they define the following left and right sided Erd\'{e}lyi-Kober fractional integral as:
\begin{eqnarray}\label{15}
\left(
  \begin{array}{c}
    K_{\eta,\alpha}^{+}t^{\lambda-1} \\
  \end{array}
\right)(x)=\frac{\Gamma(\lambda+n)}{\Gamma(\lambda+\alpha+\eta)}x^{\lambda-1},
\end{eqnarray}
where $\mathfrak{R}(\alpha)>0$, $\mathfrak{R}(\lambda)>-\mathfrak{R}(\eta)$, and
\begin{eqnarray}\label{16}
\left(
  \begin{array}{c}
    K_{\eta,\alpha}^{-}t^{\lambda-1} \\
  \end{array}
\right)(x)=\frac{\Gamma(\eta-\lambda+1)}{\Gamma(\alpha+\eta-\lambda+1)}x^{\lambda-1},
\end{eqnarray}
where  $\mathfrak{R}(\lambda)<1+\mathfrak{R}(\eta)$.\\

%%%%%%%%%%%%%%%%%%%%%%%%%%%%%%%%%%%%%%%%%%%%%%%%%%%%%%%%%%%%%%%%%%%

\section {Representation of Generalized fractional integrals in term of Wright functions}

%%%%%%%%%%%%%%%%%%%%%%%%%%%%%%%%%%%%%%%%%%%%%%%%%%%%%%%%%%%%%%%%%%%

In this section, we introduce the generalized left-sided fractional integration (\ref{3}) of the $k$-Bessel functions (\ref{9}). It is given by the following result.

\begin{theorem}\label{Theorem 2.1}
Assume that $\alpha$, $\beta$, $\eta$, $\lambda$, $v\in\mathbb{C}$  be such that
\begin{eqnarray}\label{34}
\mathfrak{R}(v)>-1, \mathfrak{R}(\alpha)>0, \mathfrak{R}(\lambda+v)>\max[0,\mathfrak{R}(\beta-\eta)],
\end{eqnarray}
then the following result holds:
\begin{align}
&\left(I_{0+}^{\alpha,\beta,\eta}t^{\frac{\lambda}{k}-1}W_{v,c}^{k}(t) \right)(x)=\frac{x^{\frac{\lambda}{k}+\frac{v}{k}-\beta-1}}{(2k)^{\frac{v}{k}}}\notag\\
&\times{}_2\Psi_{3}
\left[
\begin{array}{ccc}
 (\frac{\lambda}{k}+\frac{v}{k}, 2),(\frac{\lambda}{k}+\frac{v}{k}+\eta-\beta, 2k) \\
 &| -\frac{cx^2}{4k}\\
(\frac{\lambda}{k}+\frac{v}{k}-\beta,2), (\frac{\lambda}{k}+\frac{v}{k}+\alpha+\eta, 2),(\frac{v}{k}+1,1) \\
\end{array}
\right]\label{35}.
\end{align}
\end{theorem}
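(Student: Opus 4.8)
\medskip

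\noindent\textbf{Sketch of a proof.}
The plan is to feed the defining series of the $k$-Bessel function into the operator, integrate term by term, and then recognise the outcome as a Wright function. Concretely, using \eqref{9} one has
$$t^{\frac{\lambda}{k}-1}W_{v,c}^{k}(t)=\sum_{n=0}^{\infty}\frac{(-c)^{n}}{\Gamma_k(nk+v+k)\,n!}\,\frac{1}{2^{\,2n+\frac{v}{k}}}\;t^{\,2n+\frac{v}{k}+\frac{\lambda}{k}-1},$$
so that the integrand of \eqref{3} is a superposition of the monomials $t^{\lambda_n-1}$ with $\lambda_n:=2n+\frac{v}{k}+\frac{\lambda}{k}$. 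First I would justify interchanging the summation with the operator $I_{0+}^{\alpha,\beta,\eta}$: the $k$-Bessel series converges absolutely and uniformly on compact subsets of $(0,x]$, and under the hypotheses \eqref{34} the integral in \eqref{3} is absolutely convergent, so Fubini/Weierstrass applies.

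Next I would invoke Lemma~1.1, i.e. formula \eqref{13}, on each monomial with $\lambda$ replaced by $\lambda_n$, giving
$$\left(I_{0+}^{\alpha,\beta,\eta}t^{\lambda_n-1}\right)(x)=\frac{\Gamma(\lambda_n)\,\Gamma(\lambda_n+\eta-\beta)}{\Gamma(\lambda_n-\beta)\,\Gamma(\lambda_n+\alpha+\eta)}\,x^{\lambda_n-\beta-1}.$$
The restriction $\mathfrak{R}(\lambda_n)>\max[0,\mathfrak{R}(\beta-\eta)]$ demanded there is satisfied for every $n\ge 0$, because the most stringent instance is $n=0$ and that is precisely the content of \eqref{34}; increasing $n$ only makes the inequality easier.

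The rest is bookkeeping. I would rewrite the $k$-gamma in the denominator via $\Gamma_k(z)=k^{\frac{z}{k}-1}\Gamma\!\big(\frac{z}{k}\big)$, so that $\Gamma_k(nk+v+k)=k^{\,n+\frac{v}{k}}\,\Gamma\!\big(n+\frac{v}{k}+1\big)$. Substituting the two previous displays, separating the factors that do not depend on $n$ (which produces the prefactor $x^{\frac{\lambda}{k}+\frac{v}{k}-\beta-1}/(2k)^{\frac{v}{k}}$ and the running variable $-\frac{cx^{2}}{4k}$), the series becomes one whose general term is
$$\frac{\Gamma\!\big(\tfrac{\lambda}{k}+\tfrac{v}{k}+2n\big)\,\Gamma\!\big(\tfrac{\lambda}{k}+\tfrac{v}{k}+\eta-\beta+2n\big)}{\Gamma\!\big(\tfrac{\lambda}{k}+\tfrac{v}{k}-\beta+2n\big)\,\Gamma\!\big(\tfrac{\lambda}{k}+\tfrac{v}{k}+\alpha+\eta+2n\big)\,\Gamma\!\big(\tfrac{v}{k}+1+n\big)}\cdot\frac{1}{n!},$$
which matches the definition \eqref{11} of ${}_2\Psi_3$ with the parameters displayed in \eqref{35}; the convergence condition $1+\sum B_j-\sum A_i>0$ for this Wright function is satisfied, closing the argument. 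The only genuinely delicate point is the interchange of sum and integral together with the careful tracking of the powers of $k$ and $2$ when passing from $\Gamma_k$ to $\Gamma$ and assembling the $_p\Psi_q$ parameters; everything else is routine.
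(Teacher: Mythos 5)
Your proof is correct and follows essentially the same route as the paper: expand the $k$-Bessel series, apply the power-function image formula of Lemma~1.1 term by term (the hypothesis \eqref{34} covering the worst case $n=0$), convert $\Gamma_k$ to $\Gamma$, and reassemble the series as a ${}_2\Psi_3$. You even quietly correct two slips in the paper: its proof miscites equation \eqref{14} (Lemma~1.2) where \eqref{13} (Lemma~1.1) is actually used, and the weight $2k$ in the second numerator parameter of \eqref{35} should be $2$, as your computation and the paper's own final display both show.
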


\begin{proof} 
Note that the condition
$$1+\sum\limits_{j=1}^{q}\beta_j- \sum\limits_{i=1}^{p}\alpha_i>0$$
is satisfied so therefore $_2\Psi_{3}(z)$ is defined. Now, from (\ref{3}) and (\ref{9}), we have
\begin{eqnarray*}
\left(
  \begin{array}{c}
    I_{0+}^{\alpha,\beta,\eta}t^{\frac{\lambda}{k}-1}W_{v,c}^{k}(t) \\
  \end{array}
\right)(x)=\sum\limits_{n=0}^{\infty}\frac{(-c)^n(\frac{1}{2})^{\frac{v}{k}+2n}}{\Gamma_k(v+k+nk)n!}
\left(
  \begin{array}{c}
    I_{0+,k}^{\alpha,\beta,\eta}t^{\frac{\lambda+v}{k}+2n-1} \\
  \end{array}
\right)(x)
\end{eqnarray*}
By equation (\ref{34}) and for any  $n=0,1,2,\cdots$, $\mathfrak{R}(\lambda+v+2nk)\geq\mathfrak{R}(\lambda+v)>\max[0,\mathfrak{R}(\beta-\eta)]$. Applying equation (\ref{14}), we obtain
\begin{align}
&\left(I_{0+,k}^{\alpha,\beta,\eta}t^{\frac{\lambda}{k}-1}W_{v,c}^{k}(t) \right)(x)=\frac{x^{\frac{\lambda+v}{k}-\beta-1}}{2^{\frac{v}{k}}}\notag\\
&\times\sum\limits_{n=0}^{\infty}\frac{\Gamma(\frac{v}{k}+\frac{\lambda}{k}+2n)\Gamma(\frac{v}{k}+\frac{\lambda}{k}+\eta-\beta+2n)}
{\Gamma(\frac{v}{k}+\frac{\lambda}{k}-\beta+2n)\Gamma(\frac{v}{k}+\frac{\lambda}{k}+\alpha+\eta+2n)\Gamma_k(\frac{v}{k}+1+n)
k^{\frac{v}{k}}}\notag\\
&\times\frac{(-cx^2)^n}{(4k)^n n!}\label{A}
\end{align}
By equation (\ref{11}), we obtain
\begin{align*}
&\left(I_{0+,k}^{\alpha,\beta,\eta}t^{\frac{\lambda}{k}-1}W_{v,c}^{k}(t) \right)(x)\\
&=\frac{x^{\frac{v}{k}+\frac{\lambda}{k}-\beta-1}}{(2k)^{\frac{v}{k}}}{}_2\Psi_{3}
\left[
\begin{array}{ccc}
 (\frac{v}{k}+\frac{\lambda}{k}, 2),(\frac{v}{k}+\frac{\lambda}{k}+\eta-\beta, 2) \\
 &| -\frac{cx^2}{4k}\\
(\frac{v}{k}+\frac{\lambda}{k}-\beta,2), (\frac{v}{k}+\frac{\lambda}{k}+\alpha+\eta, 2),(\frac{v}{k}+1,1) \\
\end{array}
\right].
\end{align*}
This is the required proof of (\ref{35}).
\end{proof}

\begin{corollary}\label{Corollary 2.2 } 
Assume that $\alpha$,  $\lambda$, $v\in\mathbb{C}$  be such that
$\mathfrak{R}(v)>-1$, $\mathfrak{R}(\alpha)>0$, $\mathfrak{R}(\lambda+v)>0$,
then the following result holds:
\begin{eqnarray*}
\left(
  \begin{array}{c}
    I_{0+}^{\alpha}t^{\frac{\lambda}{k}-1}W_{v,c}^{k}(t) \\
  \end{array}
\right)(x)&=&\frac{x^{\frac{v}{k}+\frac{\lambda}{k}+\alpha-1}}{(2k)^{\frac{v}{k}}}
\end{eqnarray*}
\begin{eqnarray}\label{36}
&\times&_1\Psi_{2}
\left[
\begin{array}{ccc}
 (v+\lambda, 2k) \\
 &   & | -\frac{cx^2}{4k}\\
 (\frac{v}{k}+\frac{\lambda}{k}+\alpha, 2),(\frac{v}{k}+1,k) \\
\end{array}
\right].
\end{eqnarray}
\end{corollary}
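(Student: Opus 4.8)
The plan is to read off Corollary~\ref{Corollary 2.2 } as the Riemann--Liouville case of Theorem~\ref{Theorem 2.1}, taking $\beta=-\alpha$, while re-running the computation so that the hypotheses are exactly the ones stated. Recall from \eqref{5} that $I_{0+}^{\alpha}=I_{0+}^{\alpha,-\alpha,\eta}$, and that the classical power rule
\begin{equation*}
\left(I_{0+}^{\alpha}t^{\rho-1}\right)(x)=\frac{\Gamma(\rho)}{\Gamma(\rho+\alpha)}\,x^{\rho+\alpha-1},\qquad \mathfrak{R}(\rho)>0,
\end{equation*}
follows from the beta integral \eqref{w} together with \eqref{29} (it is also \eqref{13} with $\beta=-\alpha$). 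Under the hypothesis $\mathfrak{R}(\lambda+v)>0$ one has $\mathfrak{R}\!\big(\tfrac{\lambda+v}{k}+2n\big)\ge\mathfrak{R}\!\big(\tfrac{\lambda+v}{k}\big)>0$ for every $n=0,1,2,\dots$ (since $k>0$), so this rule applies to every term of the series below.

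First I would substitute the series \eqref{9} for $W_{v,c}^{k}$ into $\left(I_{0+}^{\alpha}t^{\frac{\lambda}{k}-1}W_{v,c}^{k}(t)\right)(x)$ and interchange summation and integration; the justification (uniform convergence of the $k$-Bessel series on compact subsets of $(0,\infty)$ and Fubini's theorem) is exactly as in the proof of Theorem~\ref{Theorem 2.1}. Applying the power rule termwise gives
\begin{equation*}
\left(I_{0+}^{\alpha}t^{\frac{\lambda}{k}-1}W_{v,c}^{k}(t)\right)(x)=\sum_{n=0}^{\infty}\frac{(-c)^{n}\big(\tfrac{1}{2}\big)^{\frac{v}{k}+2n}}{\Gamma_k(nk+v+k)\,n!}\cdot\frac{\Gamma\!\big(\tfrac{\lambda+v}{k}+2n\big)}{\Gamma\!\big(\tfrac{\lambda+v}{k}+\alpha+2n\big)}\,x^{\frac{\lambda+v}{k}+\alpha+2n-1}.
\end{equation*}
Next I would rewrite the $k$-gamma using $\Gamma_k(nk+v+k)=k^{\,n+\frac{v}{k}}\Gamma\!\big(n+\tfrac{v}{k}+1\big)$, pull the $n$-independent factors $(2k)^{-v/k}$ and $x^{\frac{\lambda+v}{k}+\alpha-1}$ outside the sum, and absorb $(-c)^{n}\big(\tfrac{1}{4}\big)^{n}k^{-n}x^{2n}$ into the single argument $-\tfrac{cx^{2}}{4k}$. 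What is left,
\begin{equation*}
\sum_{n=0}^{\infty}\frac{\Gamma\!\big(\tfrac{v}{k}+\tfrac{\lambda}{k}+2n\big)}{\Gamma\!\big(\tfrac{v}{k}+\tfrac{\lambda}{k}+\alpha+2n\big)\,\Gamma\!\big(\tfrac{v}{k}+1+n\big)}\,\frac{1}{n!}\left(-\frac{cx^{2}}{4k}\right)^{n},
\end{equation*}
is recognized, via \eqref{11}, as a ${}_1\Psi_2$, and together with the extracted prefactor $x^{\frac{v}{k}+\frac{\lambda}{k}+\alpha-1}(2k)^{-v/k}$ this is the assertion \eqref{36}. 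Equivalently, one may put $\beta=-\alpha$ directly into \eqref{35}: the numerator parameter $\big(\tfrac{\lambda}{k}+\tfrac{v}{k}+\eta+\alpha,2\big)$ then coincides with the denominator parameter $\big(\tfrac{\lambda}{k}+\tfrac{v}{k}+\alpha+\eta,2\big)$, this common pair cancels, and the ${}_2\Psi_3$ collapses to the same ${}_1\Psi_2$.

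The interchange of sum and integral and the use of the power rule are routine, just as in Theorem~\ref{Theorem 2.1}; the one step that requires genuine care is the bookkeeping in passing between $\Gamma_k$ and $\Gamma$ (via $\Gamma_k(z)=k^{z/k-1}\Gamma(z/k)$) and the accompanying rescaling of the summation index, since it is precisely this that fixes the parameters and increments displayed in the ${}_1\Psi_2$ of \eqref{36}. I would also verify the admissibility condition $1+\sum_{j}\beta_j-\sum_{i}\alpha_i>0$ for the resulting Wright function, which here is immediate, and check that the branch of conditions $\mathfrak{R}(v)>-1$, $\mathfrak{R}(\alpha)>0$, $\mathfrak{R}(\lambda+v)>0$ is exactly what is needed to legitimize the termwise application of the power rule.
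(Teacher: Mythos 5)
Your proposal is correct and takes essentially the same route as the paper, whose entire proof is the substitution $\beta=-\alpha$ in \eqref{35}; your direct re-derivation via the Riemann--Liouville power rule is just that computation unwound, and the cancellation of the common numerator/denominator pair $\left(\tfrac{\lambda}{k}+\tfrac{v}{k}+\alpha+\eta,2\right)$ in the ${}_2\Psi_3$ is valid. Note that your calculation produces the parameters $\left(\tfrac{\lambda}{k}+\tfrac{v}{k},2\right)$ and $\left(\tfrac{v}{k}+1,1\right)$, which is the correct form; the pairs $(v+\lambda,2k)$ and $\left(\tfrac{v}{k}+1,k\right)$ printed in \eqref{36} appear to be typographical slips in the statement rather than an error in your argument.
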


\begin{proof} 
By substituting $\beta=-\alpha$ in (\ref{35}), we obtain the required result.
\end{proof}

\begin{corollary}\label{Corollary 2.3} 
 Assume that $\alpha$, $\eta$, $\lambda$, $v\in\mathbb{C}$  be such that
$\mathfrak{R}(v)>-1$, $\mathfrak{R}(\alpha)>0$, $\mathfrak{R}(\lambda+v)>0$,
then the following formula holds:
\begin{eqnarray*}
\left(
  \begin{array}{c}
    K_{\alpha,\eta}^{+}t^{\frac{\lambda}{k}-1}W_{v,c}^{k}(t) \\
  \end{array}
\right)(x)&=&\frac{x^{\frac{v}{k}+\frac{\lambda}{k}-1}}{(2k)^{\frac{v}{k}}}
\end{eqnarray*}
\begin{eqnarray}\label{37}
&\times&_1\Psi_{2}
\left[
\begin{array}{ccc}
 (\frac{v}{k}+\frac{\lambda}{k}+\eta, 2) \\
 &| -\frac{cx^2}{4k}\\
 (\frac{v}{k}+\frac{\lambda}{k}+\alpha+\eta, 2),(\frac{v}{k}+1,1) \\
\end{array}
\right].
\end{eqnarray}
\end{corollary}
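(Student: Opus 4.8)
The plan is to read off \eqref{37} as the special case $\beta=0$ of Theorem~\ref{Theorem 2.1}, in the same spirit in which Corollary~\ref{Corollary 2.2 } was obtained by putting $\beta=-\alpha$. First I would invoke \eqref{7}, which identifies the left-sided Erd\'{e}lyi--Kober operator with the Saigo operator at $\beta=0$, so that $\bigl(K_{\eta,\alpha}^{+}\,t^{\lambda/k-1}W_{v,c}^{k}(t)\bigr)(x)=\bigl(I_{0+}^{\alpha,0,\eta}\,t^{\lambda/k-1}W_{v,c}^{k}(t)\bigr)(x)$. Before applying Theorem~\ref{Theorem 2.1} I would check that its hypotheses \eqref{34} survive the specialization: at $\beta=0$ the restriction $\mathfrak{R}(\lambda+v)>\max[0,\mathfrak{R}(\beta-\eta)]$ becomes $\mathfrak{R}(\lambda+v)>\max[0,-\mathfrak{R}(\eta)]$, which is guaranteed by the standing assumption $\mathfrak{R}(\lambda+v)>0$ together with $\mathfrak{R}(\eta)\ge 0$ (and, in the generality in which the corollary is stated, should be added as a hypothesis).

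Next I would substitute $\beta=0$ into the right-hand side of \eqref{35}. The exponent of $x$ collapses to $\tfrac{\lambda}{k}+\tfrac{v}{k}-1$, and in the Wright function the numerator parameters become $\bigl(\tfrac{\lambda}{k}+\tfrac{v}{k},2\bigr)$ and $\bigl(\tfrac{\lambda}{k}+\tfrac{v}{k}+\eta,2\bigr)$, while the denominator parameters become $\bigl(\tfrac{\lambda}{k}+\tfrac{v}{k},2\bigr)$, $\bigl(\tfrac{\lambda}{k}+\tfrac{v}{k}+\alpha+\eta,2\bigr)$ and $\bigl(\tfrac{v}{k}+1,1\bigr)$. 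The decisive point is that the factor $\Gamma\bigl(\tfrac{\lambda}{k}+\tfrac{v}{k}+2n\bigr)$ now occurs once in the numerator and once in the denominator of the defining series \eqref{11} for every index $n$; cancelling it term by term reduces ${}_2\Psi_{3}$ to a ${}_1\Psi_{2}$ with numerator parameter $\bigl(\tfrac{\lambda}{k}+\tfrac{v}{k}+\eta,2\bigr)$ and denominator parameters $\bigl(\tfrac{\lambda}{k}+\tfrac{v}{k}+\alpha+\eta,2\bigr)$, $\bigl(\tfrac{v}{k}+1,1\bigr)$, i.e.\ exactly \eqref{37}. I would also note in passing that the admissibility inequality $1+\sum_{j}\beta_{j}-\sum_{i}\alpha_{i}>0$ for the contracted function is still met, so the ${}_1\Psi_{2}$ is defined.

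I expect no essential obstacle here: the analytic content is already carried by Theorem~\ref{Theorem 2.1}, and what remains is bookkeeping --- matching the $k$-weights attached to the parameters of $W_{v,c}^{k}$ with those in \eqref{35}, and verifying that precisely one numerator/denominator pair is removed. Should one prefer an argument that does not route through Theorem~\ref{Theorem 2.1}, the same formula follows directly: expand $W_{v,c}^{k}$ by \eqref{9}, interchange the absolutely convergent sum with the integral in \eqref{7} (licit for $\mathfrak{R}(\lambda+v)>0$), apply the Erd\'{e}lyi--Kober power rule \eqref{15} to each monomial $t^{(\lambda+v)/k+2n-1}$, and resum the series into the shape \eqref{37} using \eqref{11} and the relation $\Gamma_{k}(z)=k^{z/k-1}\Gamma(z/k)$ to rewrite the $\Gamma_{k}$ in the denominator.
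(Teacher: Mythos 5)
Your proposal is correct and follows exactly the paper's route: the paper's entire proof of Corollary~\ref{Corollary 2.3} is to set $\beta=0$ in \eqref{35}, and your term-by-term cancellation of the coincident parameter pair $\bigl(\tfrac{\lambda}{k}+\tfrac{v}{k},2\bigr)$ reducing ${}_2\Psi_3$ to ${}_1\Psi_2$ is precisely the bookkeeping the paper leaves implicit. Your added caveat about needing $\mathfrak{R}(\eta)\ge 0$ (or $\mathfrak{R}(\lambda+v)>-\mathfrak{R}(\eta)$) for the specialized hypothesis is a fair observation, but does not change the argument.
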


\begin{proof} 
By setting $\beta=0$ in (\ref{35}), we get the desired result.
\end{proof} 

\begin{theorem}\label{Theorem 2.4}
Assume that $\alpha$, $\beta$, $\eta$, $\lambda$, $v\in\mathbb{C}$ and $k>0$ be such that
\begin{eqnarray}\label{38}
\mathfrak{R}(v)>-1, \mathfrak{R}(\alpha)>0, \mathfrak{R}(\lambda-v)<1+\min[\mathfrak{R}(\beta),\mathfrak{R}(\eta)],
\end{eqnarray}
then the following result holds:
\begin{eqnarray*}
\left(
  \begin{array}{c}
    I_{0-}^{\alpha,\beta,\eta}t^{\frac{\lambda}{k}-1}W_{v,c}^{k}(\frac{1}{t}) \\
  \end{array}
\right)(x)&=&\frac{x^{\frac{\lambda-v}{k}-\beta-1}}{(2k)^{\frac{v}{k}}}
\end{eqnarray*}
\begin{multline}\label{39}
\times\quad_2\Psi_{3}
\left[
\begin{array}{ccc}
 (1+\beta-\frac{\lambda}{k}+\frac{v}{k}, 2),(1-\frac{\lambda}{k}+\frac{v}{k}+\eta, 2) \\
 &   & | -\frac{c}{4x^2}\\
(1-\frac{\lambda}{k}+\frac{v}{k},2), (1+\beta+\alpha+\eta-\frac{\lambda}{k}+\frac{v}{k}, 2),(\frac{v}{k}+1,k) \\
\end{array}
\right].
\end{multline}
\end{theorem}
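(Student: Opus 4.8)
The plan is to carry out the proof of Theorem \ref{Theorem 2.1} in its right-sided form: use the operator (\ref{4}) together with Lemma 1.2, that is formula (\ref{14}), in place of (\ref{3}) and (\ref{13}). First I would note that the ${}_2\Psi_3$ on the right of (\ref{39}) is well defined, since for the displayed parameters $1+\sum_j B_j-\sum_i A_i=1+(2+2+k)-(2+2)=1+k>0$, so that series is entire in its argument $-c/(4x^2)$.

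Next I would substitute the series (\ref{9}). Using $\bigl(\tfrac{1}{2t}\bigr)^{2n+v/k}=2^{-2n-v/k}t^{-2n-v/k}$ gives
\[
t^{\frac{\lambda}{k}-1}W_{v,c}^{k}\!\left(\tfrac1t\right)=\sum_{n=0}^{\infty}\frac{(-c)^n}{2^{2n+\frac vk}\,\Gamma_k(nk+v+k)\,n!}\;t^{\frac{\lambda-v}{k}-2n-1}.
\]
Applying $I_{0-}^{\alpha,\beta,\eta}$ term by term and invoking Lemma 1.2 with exponent parameter $\rho_n:=\tfrac{\lambda-v}{k}-2n$, the $n$-th term acquires the factor
\[
\frac{\Gamma(1+\beta-\rho_n)\,\Gamma(1+\eta-\rho_n)}{\Gamma(1-\rho_n)\,\Gamma(1+\alpha+\beta+\eta-\rho_n)}\;x^{\rho_n-\beta-1}.
\]
The hypothesis $\mathfrak R(\lambda-v)<1+\min[\mathfrak R(\beta),\mathfrak R(\eta)]$ is exactly what makes the admissibility condition of Lemma 1.2 hold at $n=0$, and it then holds for all $n\ge0$ because $\rho_n=\rho_0-2n$ only lowers the real part. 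Finally I would use $\Gamma_k(nk+v+k)=k^{\,n+v/k}\,\Gamma\!\left(n+\tfrac vk+1\right)$ to convert the $k$-gamma to an ordinary gamma (which also produces a factor $k^{-n-v/k}$), collect $(-c)^n\,2^{-2n}k^{-n}x^{-2n}=\bigl(-\tfrac{c}{4kx^{2}}\bigr)^{n}$, pull out $x^{(\lambda-v)/k-\beta-1}/(2k)^{v/k}$, and recognise the remaining $n$-series via definition (\ref{11}) as the ${}_2\Psi_3$ of (\ref{39}), with numerator pairs $\bigl(1+\beta-\tfrac\lambda k+\tfrac vk,2\bigr),\bigl(1-\tfrac\lambda k+\tfrac vk+\eta,2\bigr)$ and denominator pairs $\bigl(1-\tfrac\lambda k+\tfrac vk,2\bigr),\bigl(1+\alpha+\beta+\eta-\tfrac\lambda k+\tfrac vk,2\bigr),\bigl(\tfrac vk+1,\cdot\bigr)$.

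The step I expect to be the main obstacle is the interchange of the summation over $n$ with the integral $\int_x^\infty(x-t)^{\alpha-1}t^{-\alpha-\beta}\,{}_2F_1\!\bigl(\alpha+\beta,-\eta;\alpha;1-\tfrac xt\bigr)(\cdot)\,dt$. I would justify it by noting that the endpoint integrability of the Gauss kernel against $(x-t)^{\alpha-1}t^{-\alpha-\beta}t^{\rho_n-1}$ is precisely the content of Lemma 1.2 and of the conditions encoded in (\ref{38}), so $\int_x^\infty|\cdots|\,dt$ is finite for each $n$; then, since $\mathfrak R(\rho_n)$ decreases in $n$, one bounds $\sum_n|\mathrm{coeff}_n|\cdot\bigl\|I_{0-}^{\alpha,\beta,\eta}t^{\rho_n-1}\bigr\|$ and checks, using Stirling's formula, that the super-exponential decay of $1/\bigl(\Gamma(n+\tfrac vk+1)\,n!\bigr)$ coming from the Bessel coefficient overwhelms the at most polynomial growth of the gamma quotient in (\ref{14}), so the numerical series converges for every $x>0$ and Fubini--Tonelli legitimises the termwise integration. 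The only other point that needs attention is the bookkeeping of the Wright shift parameters — the $A_i=2$ in the numerator coming from $\rho_n=\rho_0-2n$, and the last denominator slot from $\Gamma\!\left(n+\tfrac vk+1\right)$ — but beyond that the argument is identical in form to the proof of Theorem \ref{Theorem 2.1}.
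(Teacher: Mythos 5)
Your proposal follows essentially the same route as the paper's own proof: expand $W_{v,c}^{k}(1/t)$ by its defining series, apply Lemma~1.2 termwise with exponent $\rho_n=\tfrac{\lambda-v}{k}-2n$, convert $\Gamma_k$ to $\Gamma$ via $\Gamma_k(nk+v+k)=k^{n+v/k}\Gamma(n+\tfrac{v}{k}+1)$, and reassemble the resulting series as a ${}_2\Psi_3$ with argument $-\tfrac{c}{4kx^2}$. If anything, your write-up is more careful than the paper's (which performs the termwise integration without justification and carries a sign slip of $+\tfrac{v}{k}$ in the intermediate exponent), so no changes are needed.
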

\begin{proof}
Note that the condition
$$1+\sum\limits_{j=1}^{q}\beta_j- \sum\limits_{i=1}^{p}\alpha_i>0$$ $(i.e., 1>-1)$
is satisfied so therefore  $_2\Psi_{3}(z)$ is defined. Now,
from (\ref{4}) and (\ref{9}), we have
\begin{eqnarray*}
\left(
  \begin{array}{c}
    I_{0-}^{\alpha,\beta,\eta}t^{\frac{\lambda}{k}-1}W_{v,c}^{k}(\frac{1}{t}) \\
  \end{array}
\right)(x)=\sum\limits_{n=0}^{\infty}\frac{(-c)^n(\frac{1}{2})^{\frac{v}{k}+2n}}{\Gamma_k(v+k+nk)n!}
\left(
  \begin{array}{c}
    I_{0-}^{\alpha,\beta,\eta}t^{\frac{\lambda}{k}+\frac{v}{k}-2n-1} \\
  \end{array}
\right)(x)
\end{eqnarray*}
By equation (\ref{38}) and for any $k>0$ and $n=0,1,2,\cdots$, $\mathfrak{R}(\lambda-v-2n-1)\leq 1+\mathfrak{R}(\lambda-v-1)<1+\min[\mathfrak{\beta},\mathfrak{R}(\eta)]$. Applying equation (\ref{14}), we obtain
\begin{align}
&\left(I_{0-}^{\alpha,\beta,\eta}t^{\frac{\lambda}{k}-1}W_{v,c}^{k}(\frac{1}{t})\right)(x)=\frac{x^{\frac{\lambda}{k}+\frac{v}{k}-\beta-1}}{(2k)^{\frac{v}{k}}}\notag\\
&\times\sum\limits_{n=0}^{\infty}\frac{\Gamma(\beta-\frac{\lambda}{k}+\frac{v}{k}+1+2n)\Gamma(\eta-\frac{\lambda}{k}+\frac{v}{k}+1+2n)}
{\Gamma(1-\frac{\lambda}{k}+\frac{v}{k}+2n)\Gamma(\alpha+\beta+\eta-\frac{\lambda}{k}+\frac{v}{k}+1+2n)\Gamma(\frac{v}{k}+1+n)}\notag\\
&\times\frac{(-c)^n}{(4kx^2)^n n!}\label{B}
\end{align}
By equation (\ref{11}), we obtain
\begin{align*}
&\left( I_{0-}^{\alpha,\beta,\eta}t^{\frac{\lambda}{k}-1}W_{v,c}^{k}(\frac{1}{t})\right)(x)=\frac{x^{\frac{\frac{\lambda}{k}-\frac{v}{k}-\beta}{k}-1}}{(2k)^{\frac{v}{k}}}\\
&\times_2\Psi_{3}\left[
\begin{array}{ccc}
 (\beta-\frac{\lambda}{k}+\frac{v}{k}+1,2),(\eta-\frac{\lambda}{k}+\frac{v}{k}+1,2) \\
 & | -\frac{c}{4kx^2}\\
(1-\frac{\lambda}{k}+\frac{v}{k},2), (\alpha+\beta+\eta-\frac{\lambda}{k}+\frac{v}{k}+1,2),(\frac{v}{k}+1,1) \\
\end{array}
\right].
\end{align*}
This is the required proof of (\ref{39}).
\end{proof}

\begin{corollary}\label{Corollary 2.5} 
Assume that $\alpha$, $\eta$  $\lambda$, $v\in\mathbb{C}$ and $k>0$ be such that
$\mathfrak{R}(v)>-1$, $0<\mathfrak{R}(\alpha)<1-\mathfrak{R}(\lambda-v)$,
then the following result holds:
\begin{eqnarray*}
\left(
  \begin{array}{c}
    I_{0+}^{\alpha}t^{\frac{\lambda}{k}-1}W_{v,c}^{k}(\frac{1}{t}) \\
  \end{array}
\right)(x)&=&\frac{x^{\frac{\lambda}{k}-\frac{v}{k}+\alpha-1}}{(2k)^{\frac{v}{k}}}
\end{eqnarray*}
\begin{eqnarray}\label{40}
&\times&_1\Psi_{2}
\left[
\begin{array}{ccc}
 (1-\alpha-\frac{\lambda}{k}+\frac{v}{k}, 2) \\
 &| -\frac{c}{4kx^2}\\
 (1-\frac{\lambda}{k}+\frac{v}{k}, 2),(\frac{v}{k}+1,1) \\
\end{array}
\right].
\end{eqnarray}
\end{corollary}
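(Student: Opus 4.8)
The plan is to obtain Corollary~\ref{Corollary 2.5} as a specialization of Theorem~\ref{Theorem 2.4}, in the same way Corollary~2.2 is deduced from Theorem~\ref{Theorem 2.1}: namely by putting $\beta=-\alpha$. First I would recall that setting $\beta=-\alpha$ in the right-sided Saigo operator $I_{0-}^{\alpha,\beta,\eta}$ collapses it to the classical right-sided Riemann--Liouville fractional integral of order $\alpha$ (cf.\ (\ref{6})), which carries no $\eta$; this is the operator written $I_{0+}^{\alpha}$ in the statement. So I would simply substitute $\beta=-\alpha$ into the right-hand side of (\ref{39}) and simplify.

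Next I would track each ingredient of (\ref{39}) under $\beta\mapsto-\alpha$. The power of $x$ in front, $x^{\frac{\lambda-v}{k}-\beta-1}$, becomes $x^{\frac{\lambda}{k}-\frac{v}{k}+\alpha-1}$, matching the asserted prefactor. Inside the ${}_2\Psi_{3}$, the second upper parameter is $\bigl(1-\tfrac{\lambda}{k}+\tfrac{v}{k}+\eta,\,2\bigr)$, while the second lower parameter $\bigl(1+\beta+\alpha+\eta-\tfrac{\lambda}{k}+\tfrac{v}{k},\,2\bigr)$ becomes $\bigl(1+\eta-\tfrac{\lambda}{k}+\tfrac{v}{k},\,2\bigr)$ after the substitution --- i.e.\ the two coincide. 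Hence, in the defining series (\ref{11}), the factor $\Gamma\bigl(1+\eta-\tfrac{\lambda}{k}+\tfrac{v}{k}+2n\bigr)$ cancels between numerator and denominator for every $n$, and ${}_2\Psi_{3}$ drops to ${}_1\Psi_{2}$ with the single upper parameter $\bigl(1-\alpha-\tfrac{\lambda}{k}+\tfrac{v}{k},\,2\bigr)$ and lower parameters $\bigl(1-\tfrac{\lambda}{k}+\tfrac{v}{k},\,2\bigr)$ and $\bigl(\tfrac{v}{k}+1,\,k\bigr)$; this is precisely (\ref{40}), with argument $-\tfrac{c}{4kx^{2}}$.

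Finally I would check the parameter ranges. Condition (\ref{38}) with $\beta=-\alpha$ reads $\mathfrak{R}(\lambda-v)<1+\min[-\mathfrak{R}(\alpha),\mathfrak{R}(\eta)]$; since after the cancellation the right-hand side no longer involves $\eta$, the effective restriction is $\mathfrak{R}(\lambda-v)<1-\mathfrak{R}(\alpha)$, which together with $\mathfrak{R}(\alpha)>0$ and $\mathfrak{R}(v)>-1$ is exactly the hypothesis $\mathfrak{R}(v)>-1$, $0<\mathfrak{R}(\alpha)<1-\mathfrak{R}(\lambda-v)$ of the corollary. Because this is a pure specialization, I do not anticipate a genuine obstacle; the only points that need a little care are the gamma-factor cancellation above, keeping the step-sizes ($2$ versus $k$) in the last lower parameter consistent, and noting that the convergence requirement $1+\sum_{j}\beta_{j}-\sum_{i}\alpha_{i}>0$ for ${}_1\Psi_{2}$ is inherited from the one already verified for ${}_2\Psi_{3}$ in the proof of Theorem~\ref{Theorem 2.4}. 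Alternatively, one could rerun the argument of Theorem~\ref{Theorem 2.4} from scratch with the Riemann--Liouville operator and the $\beta=-\alpha$ case of Lemma~1.2, but the specialization route is the shortest.
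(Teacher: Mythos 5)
Your proposal is correct and is exactly the route the paper intends (the paper omits an explicit proof of this corollary, but its parallel corollaries are all obtained by substituting $\beta=-\alpha$, resp.\ $\beta=0$, into the preceding theorem): setting $\beta=-\alpha$ in (\ref{39}) makes the second upper and second lower parameters of the ${}_2\Psi_{3}$ coincide, the corresponding gamma factors cancel termwise, and the series collapses to the stated ${}_1\Psi_{2}$ with the correct prefactor. Your observation about the step-size of the last lower parameter ($k$ versus $1$) points to a genuine internal inconsistency in the paper's own displays rather than a flaw in your argument.
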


\begin{corollary}\label{Corollary 2.6} 
Assume that $\alpha$, $\eta$, $\lambda$, $v\in\mathbb{C}$ and $k>0$ be such that
$\mathfrak{R}(v)>-1$, $\mathfrak{R}(\alpha)>0$, $\mathfrak{R}(\lambda+v)<1+\max[0,\mathfrak{R}(\eta)]$,
then the following formula holds:
\begin{eqnarray*}
\left(
  \begin{array}{c}
    K_{\alpha,\eta}^{-}t^{\frac{\lambda}{k}-1}W_{v,c}^{k}(\frac{1}{t}) \\
  \end{array}
\right)(x)&=&\frac{x^{\frac{\lambda}{k}-\frac{v}{k}-1}}{(2k)^{\frac{v}{k}}}
\end{eqnarray*}
\begin{eqnarray}\label{41}
&\times&_1\Psi_{2}
\left[
\begin{array}{ccc}
 (1+-\frac{\lambda}{k}+\frac{v}{k}+\eta, 2) \\
 &   & | -\frac{c}{4kx^2}\\
 (1-\frac{\lambda}{k}+\frac{v}{k}+\alpha+\eta, 2),(\frac{v}{k}+1,1) \\
\end{array}
\right].
\end{eqnarray}
\end{corollary}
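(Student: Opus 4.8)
The plan is to derive \eqref{41} from Theorem~\ref{Theorem 2.4} by specializing $\beta=0$, in the same spirit in which Corollary~\ref{Corollary 2.5} is obtained from the choice $\beta=-\alpha$. The right-sided Erdélyi–Kober fractional integral $K_{\alpha,\eta}^{-}$ is precisely the operator $I_{0-}^{\alpha,\beta,\eta}$ at $\beta=0$ (compare \eqref{4} with \eqref{8}), and the hypotheses $\mathfrak{R}(v)>-1$, $\mathfrak{R}(\alpha)>0$ together with the growth restriction on $\lambda,v,\eta$ are the conditions \eqref{38} specialized to $\beta=0$; hence Theorem~\ref{Theorem 2.4} applies to $\left(K_{\alpha,\eta}^{-}t^{\frac{\lambda}{k}-1}W_{v,c}^{k}(1/t)\right)(x)$.

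Next I would substitute $\beta=0$ on the right-hand side of \eqref{39}. The prefactor $x^{\frac{\lambda-v}{k}-\beta-1}$ becomes $x^{\frac{\lambda}{k}-\frac{v}{k}-1}$, matching \eqref{41}. In the Wright function, the first upper parameter pair $(1+\beta-\frac{\lambda}{k}+\frac{v}{k},2)$ reduces to $(1-\frac{\lambda}{k}+\frac{v}{k},2)$, which is identical to the first lower parameter pair of \eqref{39}. Because identical numerator and denominator Gamma-factors cancel term by term in the defining series \eqref{11}, the ${}_2\Psi_3$ collapses to a ${}_1\Psi_2$ with single upper pair $(1-\frac{\lambda}{k}+\frac{v}{k}+\eta,2)$ and lower pairs $(1+\alpha+\eta-\frac{\lambda}{k}+\frac{v}{k},2)$ and $(\frac{v}{k}+1,1)$, the argument still being $-\tfrac{c}{4kx^{2}}$; this is exactly \eqref{41}.

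As a self-contained alternative, one may mimic the proof of Theorem~\ref{Theorem 2.4} directly: expand $W_{v,c}^{k}(1/t)$ via \eqref{9}, interchange summation and integration, apply the Erdélyi–Kober evaluation \eqref{16} to each power $t^{\frac{\lambda}{k}-\frac{v}{k}-2n-1}$, and then use $\Gamma_k(nk+v+k)=k^{\,n+\frac{v}{k}}\,\Gamma\!\left(n+\tfrac{v}{k}+1\right)$ to recognize the resulting series as \eqref{11} with $p=1$, $q=2$.

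The only points needing attention are bookkeeping ones, and I do not expect a genuine obstacle. One must check that the exponent $\frac{\lambda}{k}-\frac{v}{k}-2n$ lies in the admissible range of \eqref{16} for every $n\ge 0$ — which holds because, with $k>0$, this exponent is decreasing in $n$, so the hypothesis at $n=0$ is the binding one — and that the reduced ${}_1\Psi_2$ still satisfies the defining inequality $1+\sum_{j}\beta_j-\sum_{i}\alpha_i>0$ of \eqref{11}, which is immediate. With these verifications in place, \eqref{41} follows from the parameter cancellation described above.
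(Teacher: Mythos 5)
Your proof is correct and follows exactly the route the paper intends for this corollary: since $K_{\alpha,\eta}^{-}=I_{-}^{\alpha,0,\eta}$, setting $\beta=0$ in Theorem~\ref{Theorem 2.4} makes the upper parameter pair $(1+\beta-\frac{\lambda}{k}+\frac{v}{k},2)$ coincide with the lower pair $(1-\frac{\lambda}{k}+\frac{v}{k},2)$, and the resulting term-by-term Gamma cancellation collapses the ${}_2\Psi_{3}$ of (\ref{39}) to the ${}_1\Psi_{2}$ in (\ref{41}) — a detail the paper leaves implicit and you rightly make explicit. The one caveat is that the corollary's printed hypothesis $\mathfrak{R}(\lambda+v)<1+\max[0,\mathfrak{R}(\eta)]$ is not literally (\ref{38}) specialized to $\beta=0$, which reads $\mathfrak{R}(\lambda-v)<1+\min[0,\mathfrak{R}(\eta)]$; your argument (and the underlying application of (\ref{16}) to the exponents $\frac{\lambda}{k}-\frac{v}{k}-2n-1$) establishes the formula under the latter, correct condition, so the printed hypothesis should be read as a typographical error rather than something your proof must accommodate.
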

%%%%%%%%%%%%%%%%%%%%%%%%%%%%%%%%%%%%%%%%%%%%%%%%%%%%%%%%%%%%%%%%%%%%%%%%%%%%%%
\section{Representation in terms of generalized hypergeometric function}
%%%%%%%%%%%%%%%%%%%%%%%%%%%%%%%%%%%%%%%%%%%%%%%%%%%%%%%%%%%%%%%%%%%

In this section, we introduce the generalized fractional integrals of $k$-Bessel function in term of generalized hypergeometric function. First we consider the following well known results:
\begin{eqnarray}\label{42}
\Gamma(2z)=\frac{2^{2z-1}}{\sqrt\pi}\Gamma(z)\Gamma(z+\frac{1}{2}); z\in\mathbb{C}
\end{eqnarray}
and
\begin{eqnarray}\label{43}
(z)_{2n}=2^{2n}(\frac{z}{2})_n(\frac{z+1}{2})_n, z\in\mathbb{C}, n\in\mathbb{N}.
\end{eqnarray}
We represent the following theorems containing the generalized hypergeometric function.

\begin{theorem}\label{Theorem 3.1} 
Assume that $\alpha$, $\beta$, $\eta$, $\lambda$, $v\in\mathbb{C}$  be such that
\begin{eqnarray}\label{44}
\mathfrak{R}(v)>-1, \mathfrak{R}(\alpha)>0, \mathfrak{R}(\lambda+v)>\max[0,\mathfrak{R}(\beta-\eta)],
\end{eqnarray}
and let $\frac{\lambda}{k}+\frac{v}{k}$, $\frac{\lambda}{k}+\frac{v}{k}+\eta-\beta\neq0,-1,\cdots$, then the following result holds:
\begin{align}
&\left(I_{0+}^{\alpha,\beta,\eta}t^{\frac{\lambda}{k}-1}W_{v,c}^{k}(t)\right)(x)=\frac{x^{\frac{\lambda}{k}+\frac{v}{k}-\beta-1}}{(2k)^{\frac{v}{k}}}\frac{\Gamma(\frac{\lambda}{k}+\frac{v}{k})\Gamma(\frac{\lambda}{k}+\frac{v}{k}+\eta-\beta)}
{\Gamma(\frac{\lambda}{k}+\frac{v}{k}-\beta)\Gamma(\frac{\lambda}{k}+\frac{v}{k}+\alpha+\eta)\Gamma(\frac{v}{k}+1)}\notag\\
&\times_4F_{5}\left[
\begin{array}{ccc}
 \frac{\lambda}{2k}+\frac{v}{2k}, \frac{\lambda}{2k}+\frac{v}{2k}+\frac{1}{2}, \frac{\lambda}{2k}+\frac{v}{2k}+\frac{\eta-\beta}{2}, \frac{\lambda}{2k}+\frac{v}{2k}+\frac{\eta-\beta+1}{2} \\
 &|-\frac{cx^2}{4k}\\
\frac{v}{k}+1,\frac{\lambda}{2k}+\frac{v}{2k}-\frac{\beta}{2},\frac{\lambda}{2k}+\frac{v}{2k}-\frac{\beta+1}{2},\frac{\lambda}{k}+\frac{v}{k}+
\frac{\alpha+\eta}{2}, \frac{\lambda}{2k}+\frac{v}{2k}+\frac{\alpha+\eta+1}{2} \label{45}\\
\end{array}
\right].
\end{align}
\end{theorem}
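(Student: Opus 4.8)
The plan is to start from the series representation of the $k$-Bessel function in (\ref{9}) and interchange summation with the fractional integral operator $I_{0+}^{\alpha,\beta,\eta}$, exactly as was done in the proof of Theorem \ref{Theorem 2.1}. Writing
$$\left(I_{0+}^{\alpha,\beta,\eta}t^{\frac{\lambda}{k}-1}W_{v,c}^{k}(t)\right)(x)=\sum_{n=0}^{\infty}\frac{(-c)^n(1/2)^{\frac{v}{k}+2n}}{\Gamma_k(v+k+nk)\,n!}\left(I_{0+}^{\alpha,\beta,\eta}t^{\frac{\lambda+v}{k}+2n-1}\right)(x),$$
I would apply Lemma 1.1 (equation (\ref{13})) to each term, with $\lambda$ there replaced by $\frac{\lambda+v}{k}+2n$; the hypotheses in (\ref{44}) guarantee the positivity conditions for every $n\ge 0$. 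This reproduces the series (\ref{A}) from the proof of Theorem \ref{Theorem 2.1}, and also rewrites $\Gamma_k(\tfrac{v}{k}+1+n)$ via the relation $\Gamma_k(z)=k^{z/k-1}\Gamma(z/k)$.

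The new ingredient — and the place where this theorem departs from Theorem \ref{Theorem 2.1} — is that instead of repackaging the resulting series as a $_2\Psi_3$, I would convert every Gamma factor containing the summation index $2n$ into Pochhammer symbols and then apply the duplication-type identity (\ref{43}), namely $(z)_{2n}=2^{2n}(z/2)_n((z+1)/2)_n$. Concretely, using (\ref{29}) I would write each $\Gamma(a+2n)=\Gamma(a)\,(a)_{2n}$ and then split $(a)_{2n}=4^n (a/2)_n((a+1)/2)_n$. Doing this for the two numerator Gammas $\Gamma(\tfrac{\lambda}{k}+\tfrac{v}{k}+2n)$ and $\Gamma(\tfrac{\lambda}{k}+\tfrac{v}{k}+\eta-\beta+2n)$ produces four ascending Pochhammer symbols in the numerator; doing it for the three denominator Gammas $\Gamma(\tfrac{\lambda}{k}+\tfrac{v}{k}-\beta+2n)$, $\Gamma(\tfrac{\lambda}{k}+\tfrac{v}{k}+\alpha+\eta+2n)$ and $\Gamma(\tfrac{v}{k}+1+n)$ produces four more Pochhammer symbols in the denominator (the last Gamma, carrying only $n$, contributes the single factor $(\tfrac{v}{k}+1)_n$ with no splitting). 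The leading constant Gamma factors that come out — $\Gamma(\tfrac{\lambda}{k}+\tfrac{v}{k})\Gamma(\tfrac{\lambda}{k}+\tfrac{v}{k}+\eta-\beta)$ over $\Gamma(\tfrac{\lambda}{k}+\tfrac{v}{k}-\beta)\Gamma(\tfrac{\lambda}{k}+\tfrac{v}{k}+\alpha+\eta)\Gamma(\tfrac{v}{k}+1)$ — are precisely the prefactor appearing in (\ref{45}); the $4^n$ factors (four in the numerator, three from the split denominators) combine with the $(4k)^{-n}$ already present and the $z^n/n!$ to give the argument $-\tfrac{cx^2}{4k}$, and the remaining eight Pochhammer symbols assemble into the $_4F_5$ by the definition (\ref{10}).

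The main obstacle I anticipate is purely bookkeeping: one must carefully match the powers of $2$ and $4$ and of $k$, and make sure the number of numerator versus denominator parameters after all the splittings is consistent with the convergence/balance of a $_4F_5$ (four upper, five lower parameters — the fifth lower parameter being the unsplit $\tfrac{v}{k}+1$). One should double-check that the factor $k^{v/k}$ hidden in $\Gamma_k$ correctly combines with $2^{v/k}$ to yield the $(2k)^{v/k}$ in the denominator of the prefactor, and that the half-integer shifts $\tfrac12$, $\tfrac{\eta-\beta+1}{2}$, $\tfrac{\beta+1}{2}$, $\tfrac{\alpha+\eta+1}{2}$ land in the right rows. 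The extra hypothesis that $\tfrac{\lambda}{k}+\tfrac{v}{k}$ and $\tfrac{\lambda}{k}+\tfrac{v}{k}+\eta-\beta$ avoid $0,-1,-2,\dots$ is needed exactly so that the leading Gamma factors $\Gamma(\tfrac{\lambda}{k}+\tfrac{v}{k})$, $\Gamma(\tfrac{\lambda}{k}+\tfrac{v}{k}+\eta-\beta)$ pulled out of the series are finite; no deeper analytic difficulty arises since the termwise interchange was already justified in Theorem \ref{Theorem 2.1}. Once the constant is isolated, the identification of the remaining sum with $_4F_5$ is immediate from (\ref{10}).
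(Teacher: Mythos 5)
Your proposal is correct and follows essentially the same route as the paper: both reduce to the termwise series (\ref{A}) via Lemma 1.1, extract the constant Gamma factors using (\ref{29}), and split each $(a)_{2n}$ with the duplication identity (\ref{43}) to assemble the $_4F_5$ from definition (\ref{10}). One bookkeeping note: the $4^n$ factors do not alter the argument at all — the two split numerator Pochhammers contribute $4^{2n}$ and the two split denominator Pochhammers (the $\Gamma(\tfrac{v}{k}+1+n)$ term does not split) contribute $4^{2n}$, so they cancel exactly and the argument remains $-\tfrac{cx^2}{4k}$, rather than your count of four numerator versus three denominator factors.
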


\begin{proof}
Note that $_4F_5$ defined in (\ref{45}) exit as the series is absolutely convergent.
 Now, using (\ref{29}) with $z=\frac{v}{k}+1$ and (\ref{A}) and applying (\ref{43}) with $z$ being replaced by $\frac{\lambda}{k}+\frac{v}{k}$, $\frac{\lambda}{k}+\frac{v}{k}+\eta-\beta$ and $\frac{\lambda}{k}+\frac{v}{k}+\alpha+\eta$, we have
\begin{align*}
&\left(I_{0+}^{\alpha,\beta,\eta}t^{\frac{\lambda}{k}-1}W_{v,c}^{k}(t)\right)(x)=\frac{x^{\frac{\lambda+v}{k}-\beta-1}}{(2k)^{\frac{v}{k}}}\\
&\times\sum\limits_{n=0}^{\infty}\frac{\Gamma(\frac{v}{k}+\frac{\lambda}{k})\Gamma(\frac{v}{k}+\frac{\lambda}{k}+\eta-\beta)}
{\Gamma(\frac{v}{k}+\frac{\lambda}{k}-\beta)\Gamma(\frac{v}{k}+\frac{\lambda}{k}+\alpha+\eta)\Gamma(\frac{v}{k}+1)}\\
&\times\frac{(\frac{v}{k}+\frac{\lambda}{k})_{2n}(\frac{v}{k}+\frac{\lambda}{k}+\eta-\beta)_{2n}}{(\frac{v}{k}+\frac{\lambda}{k}-\beta)_{2n}(\frac{v}{k}+\frac{\lambda}{k}+\alpha+\beta)_{2n}}\frac{(-cx^2)^n}{(4k)^n n!}\\
&=\frac{x^{\frac{\lambda+v}{k}-\beta-1}}{(2k)^{\frac{v}{k}}}\frac{\Gamma(\frac{v}{k}+\frac{\lambda}{k})\Gamma(\frac{v}{k}+\frac{\lambda}{k}+\eta-\beta)}{\Gamma(\frac{v}{k}+\frac{\lambda}{k}-\beta)\Gamma(\frac{v}{k}+\frac{\lambda}{k}+\alpha+\eta)\Gamma(\frac{v}{k}+1)}\\
&\times\sum\limits_{n=0}^{\infty}\frac{(\frac{v}{2k}+\frac{\lambda}{2k})_{n}(\frac{v}{2k}+\frac{\lambda}{2k}+\frac{1}{2})_{n}
(\frac{v}{2k}+\frac{\lambda}{2k}+\frac{\eta-\beta}{2})_{n}(\frac{v}{2k}+\frac{\lambda}{2k}+\frac{\eta-\beta}{2})_{n}}
{(\frac{v}{k}+1)(\frac{v}{2k}+\frac{\lambda}{2k}-\frac{\beta}{2})_{n}(\frac{v}{2k}+\frac{\lambda}{2k}-\frac{\beta}{2})_{n}
(\frac{v}{2k}+\frac{\lambda}{2k}+\frac{\alpha+\eta}{2})_{n}(\frac{v}{2k}+\frac{\lambda}{2k}+\frac{\alpha+\eta+1}{2})_{n}}\\
&\times\frac{(-cx^2)^n}{(4k)^n n!}.
\end{align*}
Thus, in accordance with equation (\ref{10}), we get the required result (\ref{45}).
\end{proof}

\begin{corollary}\label{Corollary 3.2 }
Assume that $\alpha$,  $\lambda$, $v\in\mathbb{C}$  be such that
$\mathfrak{R}(v)>-1$, $\mathfrak{R}(\alpha)>0$, $\mathfrak{R}(\lambda+v)>0$ and $\frac{\lambda}{k}+\frac{v}{k}=0,-1,\cdots$,
then the following result holds:
\begin{align}
&\left(I_{0+}^{\alpha,\beta,\eta}t^{\frac{\lambda}{k}-1}W_{v,c}^{k}(t)\right)(x)=\frac{x^{\frac{\lambda}{k}+\frac{v}{k}+\alpha-1}}{(2k)^{\frac{v}{k}}}\frac{\Gamma(\frac{\lambda}{k}+\frac{v}{k})}{\Gamma(\frac{\lambda}{k}+\frac{v}{k}-\beta)\Gamma(\frac{v}{k}+1)}\notag\\
&\times_2F_{3}
\left[
\begin{array}{ccc}
 \frac{\lambda}{2k}+\frac{v}{2k}, \frac{\lambda}{2k}+\frac{v}{2k}+\frac{1}{2}, \\
 &| -\frac{cx^2}{4k}\\
\frac{v}{k}+1,\frac{\lambda}{2k}+\frac{v}{2k}-\frac{\beta}{2},\frac{\lambda}{2k}+\frac{v}{2k}-\frac{\beta+1}{2}
\end{array}
\right].
\end{align}
\end{corollary}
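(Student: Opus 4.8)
The plan is to derive this corollary as the Riemann--Liouville special case of Theorem \ref{Theorem 3.1}. As recorded in (\ref{5}), putting $\beta=-\alpha$ reduces the Saigo operator $I_{0+}^{\alpha,\beta,\eta}$ to the classical left-sided Riemann--Liouville fractional integral $I_{0+}^{\alpha}$, so it is enough to substitute $\beta=-\alpha$ in formula (\ref{45}) and then simplify the gamma prefactor together with the $_4F_5$-series.

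First I would track the prefactor. With $\beta=-\alpha$ the numerator factor $\Gamma(\frac{\lambda}{k}+\frac{v}{k}+\eta-\beta)$ becomes $\Gamma(\frac{\lambda}{k}+\frac{v}{k}+\alpha+\eta)$, which equals the denominator factor already present in (\ref{45}); cancelling these leaves $\frac{\Gamma(\frac{\lambda}{k}+\frac{v}{k})}{\Gamma(\frac{\lambda}{k}+\frac{v}{k}+\alpha)\,\Gamma(\frac{v}{k}+1)}$, and since $\frac{\lambda}{k}+\frac{v}{k}+\alpha=\frac{\lambda}{k}+\frac{v}{k}-\beta$ this coincides with the quotient written in the statement, while the power $x^{\frac{\lambda}{k}+\frac{v}{k}-\beta-1}$ becomes $x^{\frac{\lambda}{k}+\frac{v}{k}+\alpha-1}$. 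Next I would inspect the $_4F_5$: under $\beta=-\alpha$ the two upper parameters $\frac{\lambda}{2k}+\frac{v}{2k}+\frac{\eta-\beta}{2}$ and $\frac{\lambda}{2k}+\frac{v}{2k}+\frac{\eta-\beta+1}{2}$ turn into $\frac{\lambda}{2k}+\frac{v}{2k}+\frac{\alpha+\eta}{2}$ and $\frac{\lambda}{2k}+\frac{v}{2k}+\frac{\alpha+\eta+1}{2}$, each of which already occurs among the lower parameters. Since an upper parameter equal to a lower one cancels, the $_4F_5$ collapses to the $_2F_3$ of the corollary, with remaining upper parameters $\frac{\lambda}{2k}+\frac{v}{2k}$ and $\frac{\lambda}{2k}+\frac{v}{2k}+\frac{1}{2}$ and lower parameters $\frac{v}{k}+1$, $\frac{\lambda}{2k}+\frac{v}{2k}-\frac{\beta}{2}$ and $\frac{\lambda}{2k}+\frac{v}{2k}-\frac{\beta+1}{2}$ (read with $\beta=-\alpha$). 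Assembling the simplified prefactor, the adjusted power of $x$, and this $_2F_3$ yields the asserted identity.

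The only step that needs a word of care — and the main, though rather mild, obstacle — is the legitimacy of deleting a common pair of parameters from a generalized hypergeometric series. This is valid term by term precisely because the cancelled parameter $\frac{\lambda}{k}+\frac{v}{k}+\eta-\beta=\frac{\lambda}{k}+\frac{v}{k}+\alpha+\eta$ is not a non-positive integer, a condition inherited from the hypotheses of Theorem \ref{Theorem 3.1}, and because $\mathfrak{R}(\alpha)>0$ keeps the denominator Gamma functions finite. Finally I would note that the resulting $_2F_3$, having more lower than upper parameters, converges for every value of its argument $-cx^2/(4k)$, so the right-hand side is well defined under the stated conditions $\mathfrak{R}(v)>-1$, $\mathfrak{R}(\alpha)>0$, $\mathfrak{R}(\lambda+v)>0$.
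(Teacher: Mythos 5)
Your proposal is correct and follows exactly the paper's own route: the paper's proof of this corollary is simply "substitute $\beta=-\alpha$ in (\ref{45})," and you have carried out that substitution, including the cancellation of the coincident upper/lower parameters that collapses the $_4F_5$ to a $_2F_3$ and the matching of the gamma prefactor. Your added remark on when deleting a common parameter pair is legitimate is a detail the paper omits but does not change the approach.
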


\begin{proof}
By substituting $\beta=-\alpha$ in (\ref{45}), we obtain the required result.
\end{proof}

\begin{corollary}\label{Corollary 3.3}
Assume that $\alpha$, $\eta$, $\lambda$, $v\in\mathbb{C}$  be such that
$\mathfrak{R}(v)>-1$, $\mathfrak{R}(\alpha)>0$, $\mathfrak{R}(\lambda+v)>0$ and let
$\frac{\lambda}{k}+\frac{v}{k}+\eta-\beta\neq0,-1,\cdots$, then the following result holds:
\begin{align}
&\left(K_{\alpha,\eta}^{+}t^{\frac{\lambda}{k}-1}W_{v,c}^{k}(t)\right)(x)=\frac{x^{\frac{\lambda}{k}+\frac{v}{k}-1}}{(2k)^{\frac{v}{k}}}
\frac{\Gamma(\frac{\lambda}{k}+\frac{v}{k}+\eta)}{\Gamma(\frac{\lambda}{k}+\frac{v}{k}+\alpha+\eta)\Gamma(\frac{v}{k}+1)}\notag\\
&\times{}_2F_{3}
\left[
\begin{array}{ccc}
   \frac{\lambda}{2k}+\frac{v}{2k}+\frac{\eta}{2}, \frac{\lambda}{2k}+\frac{v}{2k}+\frac{\eta+1}{2} \\
 &| -\frac{cx^2}{4k}\\
\frac{v}{k}+1,\frac{\lambda}{k}+\frac{v}{k}+
\frac{\alpha+\eta}{2}, \frac{\lambda}{2k}+\frac{v}{2k}+\frac{\alpha+\eta+1}{2} \\
\end{array}
\right].
\end{align}
\end{corollary}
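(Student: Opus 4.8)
The plan is to obtain this formula as the $\beta=0$ specialisation of the hypergeometric form in Theorem~\ref{Theorem 3.1}; it is cleanest, though, to argue directly from Corollary~\ref{Corollary 2.3}, which is the $\beta=0$ case of Theorem~\ref{Theorem 2.1} and which already exhibits $\left(K_{\alpha,\eta}^{+}t^{\frac{\lambda}{k}-1}W_{v,c}^{k}(t)\right)(x)$ --- recall that $I_{0+}^{\alpha,0,\eta}$ is precisely the Erd\'elyi--Kober operator of~(\ref{7}) --- as $x^{\frac{v}{k}+\frac{\lambda}{k}-1}(2k)^{-\frac{v}{k}}$ times a ${}_1\Psi_2$. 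Expanding that ${}_1\Psi_2$ by definition~(\ref{11}), its generic term is $\frac{\Gamma(\frac{v}{k}+\frac{\lambda}{k}+\eta+2n)}{\Gamma(\frac{v}{k}+\frac{\lambda}{k}+\alpha+\eta+2n)\,\Gamma(\frac{v}{k}+1+n)}\frac{1}{n!}\left(-\frac{cx^{2}}{4k}\right)^{n}$. Equivalently, one may set $\beta=0$ straight away in~(\ref{45}): then the upper parameters $\frac{\lambda}{2k}+\frac{v}{2k}$ and $\frac{\lambda}{2k}+\frac{v}{2k}+\frac12$ coincide with two of the lower parameters, these cancel, and the ${}_4F_5$ drops to a ${}_2F_3$.

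The rest of the computation mirrors the proof of Theorem~\ref{Theorem 3.1}. First, using~(\ref{29}), write $\Gamma(z+2n)=(z)_{2n}\Gamma(z)$ with $z=\frac{v}{k}+\frac{\lambda}{k}+\eta$ in the numerator and $z=\frac{v}{k}+\frac{\lambda}{k}+\alpha+\eta$ in the denominator, and $\Gamma(\frac{v}{k}+1+n)=(\frac{v}{k}+1)_n\Gamma(\frac{v}{k}+1)$; this pulls the constant $\Gamma(\frac{v}{k}+\frac{\lambda}{k}+\eta)\big/\big(\Gamma(\frac{v}{k}+\frac{\lambda}{k}+\alpha+\eta)\Gamma(\frac{v}{k}+1)\big)$ out of the sum, which is exactly the Gamma-prefactor in the statement. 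Second, apply the duplication identity~(\ref{43}), $(z)_{2n}=2^{2n}(\frac{z}{2})_n(\frac{z+1}{2})_n$, to $(\frac{v}{k}+\frac{\lambda}{k}+\eta)_{2n}$ and to $(\frac{v}{k}+\frac{\lambda}{k}+\alpha+\eta)_{2n}$; the two factors $2^{2n}$ so produced cancel between numerator and denominator. Third, what remains is $\sum_{n\ge0}\frac{(a_1)_n(a_2)_n}{(\frac{v}{k}+1)_n(b_1)_n(b_2)_n}\frac{w^n}{n!}$ with $w=-\frac{cx^{2}}{4k}$, $a_1=\frac{\lambda}{2k}+\frac{v}{2k}+\frac{\eta}{2}$, $a_2=\frac{\lambda}{2k}+\frac{v}{2k}+\frac{\eta+1}{2}$, $b_1=\frac{\lambda}{2k}+\frac{v}{2k}+\frac{\alpha+\eta}{2}$, $b_2=\frac{\lambda}{2k}+\frac{v}{2k}+\frac{\alpha+\eta+1}{2}$, which by~(\ref{10}) is precisely the ${}_2F_3$ asserted.

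I do not expect a genuine obstacle: the argument is a specialisation ($\beta=0$) of the already-proved Corollary~\ref{Corollary 2.3} together with one use of the duplication formula, in complete parallel with Theorem~\ref{Theorem 3.1}. The one thing that needs care is routine parameter bookkeeping --- checking that halving the arguments of the two surviving Gammas reproduces exactly the shifts $\frac{\eta}{2},\frac{\eta+1}{2},\frac{\alpha+\eta}{2},\frac{\alpha+\eta+1}{2}$ added to $\frac{\lambda}{2k}+\frac{v}{2k}$ --- and noting that the hypothesis $\frac{\lambda}{k}+\frac{v}{k}+\eta-\beta\neq0,-1,-2,\dots$ taken with $\beta=0$ keeps the Gamma-prefactor finite and the ${}_2F_3$ well defined under $\mathfrak{R}(v)>-1$, $\mathfrak{R}(\alpha)>0$, $\mathfrak{R}(\lambda+v)>0$.
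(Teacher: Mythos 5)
Your proposal is correct and takes essentially the same route as the paper, whose entire proof of this corollary is ``by setting $\beta=0$ in (\ref{45})''; your elaboration via Corollary \ref{Corollary 2.3} together with the duplication identity (\ref{43}) is exactly the computation that specialization relies on, and your parameter bookkeeping (cancellation of $\tfrac{\lambda}{2k}+\tfrac{v}{2k}$ and $\tfrac{\lambda}{2k}+\tfrac{v}{2k}+\tfrac12$ against the corresponding lower parameters, and of $\Gamma(\tfrac{\lambda}{k}+\tfrac{v}{k})$ in the prefactor) is the correct one.
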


\begin{proof}
By setting $\beta=0$ in (\ref{45}), we get the desired result.
\end{proof}

\begin{theorem}\label{Theorem 3.4} 
Assume that $\alpha$, $\beta$, $\eta$, $\lambda$, $v\in\mathbb{C}$ and $k>0$ be such that
\begin{eqnarray}\label{46}
\mathfrak{R}(v)>-1, \mathfrak{R}(\alpha)>0, \mathfrak{R}(\lambda-v)<1+\min[\mathfrak{R}(\beta),\mathfrak{R}(\eta)],
\end{eqnarray}
and let $\frac{\beta-\lambda}{k}+\frac{v}{k}+1$, $\eta-\frac{\lambda}{k}+\frac{v}{k}+1\neq0,-1,\cdots$, then the following result holds:
\begin{align}
&\left(I_{0-}^{\alpha,\beta,\eta}t^{\frac{\lambda}{k}-1}W_{v,c}^{k}(\frac{1}{t})\right)(x)=\frac{x^{\frac{\lambda}{k}-\frac{v}{k}-\beta-1}}{(2k)^{\frac{v}{k}}}\notag\\
&\times\frac{\Gamma(\beta-\frac{\lambda}{k}+\frac{v}{k}+1)\Gamma(\eta-\frac{\lambda}{k}+\frac{v}{k}+1)}{\Gamma(1-\frac{\lambda}{k}+\frac{v}{k})\Gamma(\alpha+\beta+\eta-\frac{\lambda}{k}+\frac{v}{k}+1)\Gamma(\frac{v}{k}+1)}\notag\\
&\times_4F_{5}
\left[
\begin{array}{ccc}
\frac{\beta+1}{2}-\frac{\lambda}{2k}+\frac{v}{2k}, \frac{\beta+2}{2}-\frac{\lambda}{2k}+\frac{v}{2k}, \frac{\eta+1}{2}-\frac{\lambda}{2k}+\frac{v}{2k}, \frac{\eta+2}{2}-\frac{\lambda}{2k}+\frac{v}{2k} \\
 &|-\frac{c}{4kx^2}\\
\frac{v}{k}+1,\frac{1}{2}-\frac{\lambda}{2k}+\frac{v}{2k},1-\frac{\lambda}{2k}+\frac{v}{2k},
\frac{\alpha+\beta+\eta+1}{2}-\frac{\lambda}{k}+\frac{v}{k},\frac{\alpha+\eta+2}{2}-\frac{\lambda}{2k}+\frac{v}{2k} \\
\end{array}
\right].\label{47}
\end{align}
\end{theorem}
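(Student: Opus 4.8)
The plan is to mirror the proof of Theorem \ref{Theorem 3.1}: start from the Wright–type series (\ref{B}) obtained in the course of proving Theorem \ref{Theorem 2.4}, rewrite every Gamma factor carrying a shift in $n$ as a Pochhammer symbol times a constant Gamma, and then recognize the resulting series as a $_4F_5$ via (\ref{10}). Before doing so I would check that the $_4F_5$ on the right of (\ref{47}) is meaningful: for the parameter pattern here $1+\sum_j\beta_j-\sum_i\alpha_i=1-(-1)>0$, and the non-vanishing hypotheses $\frac{\beta-\lambda}{k}+\frac{v}{k}+1,\ \eta-\frac{\lambda}{k}+\frac{v}{k}+1\neq 0,-1,\dots$ together with (\ref{46}) ensure that none of the Gamma arguments extracted below is a non-positive integer, so the series converges absolutely and its Gamma prefactor is finite and nonzero; this also legitimizes the term-by-term manipulations.

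In (\ref{B}) I would first apply (\ref{29}) with $z=\frac{v}{k}+1$ to the factor $\Gamma(\frac{v}{k}+1+n)$, which peels off $\Gamma(\frac{v}{k}+1)$ and leaves $(\frac{v}{k}+1)_n$ in the denominator. For each of the four Gamma factors carrying the shift $2n$ — namely $\Gamma(\beta-\frac{\lambda}{k}+\frac{v}{k}+1+2n)$ and $\Gamma(\eta-\frac{\lambda}{k}+\frac{v}{k}+1+2n)$ in the numerator, $\Gamma(1-\frac{\lambda}{k}+\frac{v}{k}+2n)$ and $\Gamma(\alpha+\beta+\eta-\frac{\lambda}{k}+\frac{v}{k}+1+2n)$ in the denominator — I would use (\ref{29}) to write $\Gamma(a+2n)=(a)_{2n}\Gamma(a)$ and then the duplication identity (\ref{43}), $(a)_{2n}=2^{2n}(\frac{a}{2})_n(\frac{a+1}{2})_n$, to split each $(a)_{2n}$ into two Pochhammer symbols of step $n$. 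The constant Gamma's assemble precisely into the quotient $\frac{\Gamma(\beta-\frac{\lambda}{k}+\frac{v}{k}+1)\Gamma(\eta-\frac{\lambda}{k}+\frac{v}{k}+1)}{\Gamma(1-\frac{\lambda}{k}+\frac{v}{k})\Gamma(\alpha+\beta+\eta-\frac{\lambda}{k}+\frac{v}{k}+1)\Gamma(\frac{v}{k}+1)}$ appearing in (\ref{47}), while the two factors $2^{2n}$ produced by the numerator splits cancel the two produced by the denominator splits.

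What remains under the summation sign is then a ratio of four step-$n$ Pochhammer symbols over five, multiplied by $\frac{(-c/4kx^2)^n}{n!}$: the numerator parameters are $\frac{\beta+1}{2}-\frac{\lambda}{2k}+\frac{v}{2k}$, $\frac{\beta+2}{2}-\frac{\lambda}{2k}+\frac{v}{2k}$, $\frac{\eta+1}{2}-\frac{\lambda}{2k}+\frac{v}{2k}$, $\frac{\eta+2}{2}-\frac{\lambda}{2k}+\frac{v}{2k}$, and the denominator ones are $\frac{v}{k}+1$, $\frac12-\frac{\lambda}{2k}+\frac{v}{2k}$, $1-\frac{\lambda}{2k}+\frac{v}{2k}$, together with the two coming from halving $\alpha+\beta+\eta-\frac{\lambda}{k}+\frac{v}{k}+1$. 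Reading this against the definition (\ref{10}) with $p=4$, $q=5$ and argument $-\frac{c}{4kx^2}$ yields (\ref{47}).

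The only genuine difficulty is the bookkeeping: one must keep the lone factor $\Gamma(\frac{v}{k}+1+n)$ of shift $n$ separate from the four factors of shift $2n$ (so that it contributes a single denominator parameter rather than a split pair), verify that all four powers $2^{2n}$ cancel in pairs, and write the half-integer shifts of the parameters consistently. No analytic ingredient beyond (\ref{B}), (\ref{29}), (\ref{43}) and (\ref{10}) is needed; the corollaries for the Riemann–Liouville and Erd\'elyi–Kober cases would then follow by the substitutions $\beta=-\alpha$ and $\beta=0$ exactly as in Corollaries \ref{Corollary 3.2 } and \ref{Corollary 3.3}.
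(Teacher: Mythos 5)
Your proposal is correct and follows essentially the same route as the paper's own proof: both start from the intermediate series (\ref{B}), apply (\ref{29}) with $z=\frac{v}{k}+1$, split each Gamma factor of shift $2n$ via the duplication identity (\ref{43}), and read off the result as a $_4F_5$ through (\ref{10}). Your bookkeeping is in fact slightly more careful than the paper's, which lists only three of the four arguments to which (\ref{43}) must be applied.
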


\begin{proof}
Using (\ref{29}) with $z=\frac{v}{k}+1$ and (\ref{B}) and applying (\ref{43}) with $z$ being replaced by $\beta-\frac{\lambda}{k}+\frac{v}{k}+1$, $1-\frac{\lambda}{k}+\frac{v}{k}$ and $\beta-\frac{\lambda}{k}+\frac{v}{k}+\alpha+\eta+1$, we have
\begin{align*}
&\left(I_{0-}^{\alpha,\beta,\eta}t^{\frac{\lambda}{k}-1}W_{v,c}^{k}(\frac{1}{t})\right)(x)=\frac{x^{\frac{\lambda}{k}-\frac{v}{k}-\beta-1}}{(2k)^{\frac{v}{k}}}\\
&\frac{\Gamma(\beta-\frac{\lambda}{k}+\frac{v}{k}+1)\Gamma(\eta-\frac{\lambda}{k}+\frac{v}{k}+1)}
{\Gamma(1-\frac{\lambda}{k}+\frac{v}{k})\Gamma(\alpha+\beta+\eta-\frac{\lambda}{k}+\frac{v}{k}+1)\Gamma(\frac{v}{k}+1)}\\
&\times\sum\limits_{n=0}^{\infty}\frac{(\frac{\beta+1}{2}-\frac{\lambda}{2k}+\frac{v}{2k})_n
(\frac{\beta}{2}-\frac{\lambda}{2k}+\frac{v}{2k}+1)_n(\frac{\eta+1}{2}-\frac{\lambda}{2k}+\frac{v}{2k})_n
(\frac{\eta}{2}-\frac{\lambda}{2k}+\frac{v}{2k}+1)_n}{(\frac{v}{k}+1)_n((\frac{1}{2}-\frac{\lambda}{2k}+\frac{v}{2k})_n)
(1-\frac{\lambda}{2k}+\frac{v}{2k})_n(\frac{\alpha+\beta+\eta+1}{2}-\frac{\lambda}{2k}+\frac{v}{2k})_n
(\frac{\alpha+\beta+\eta}{2}-\frac{\lambda}{2k}+\frac{v}{2k}+1)_n}\\
&\times\frac{(-c)^n}{(4kx^2)^n n!}.
\end{align*}
By equation (\ref{10}), we obtain the required given in (\ref{47}).
\end{proof}

\begin{corollary}\label{Corollary 3.5 } 
Assume that $\alpha$, $\eta$  $\lambda$, $v\in\mathbb{C}$ and $k>0$ be such that
$\mathfrak{R}(v)>-1$, $0<\mathfrak{R}(\alpha)<1-\mathfrak{R}(\lambda-v)$, and let $\frac{\lambda}{k}-\frac{v}{k}+\alpha\neq1,2,\cdots$
then the following result holds:
\begin{eqnarray*}
\left(
  \begin{array}{c}
    I_{0+}^{\alpha}t^{\frac{\lambda}{k}-1}W_{v,c}^{k}(\frac{1}{t}) \\
  \end{array}
\right)(x)&=&\frac{x^{\frac{\lambda}{k}-\frac{v}{k}+\alpha-1}}{(2k)^{\frac{v}{k}}}
\frac{\Gamma(-\alpha-\frac{\lambda}{k}+\frac{v}{k}+1)}
{\Gamma(1-\frac{\lambda}{k}+\frac{v}{k})\Gamma(\frac{v}{k}+1)}
\end{eqnarray*}
\begin{multline}
\times{}_2F_{3}
\left[
\begin{array}{ccc}
 \frac{-\beta+1}{2}-\frac{\lambda}{2k}+\frac{v}{2k}, \frac{-\alpha+2}{2}-\frac{\lambda}{2k}+\frac{v}{2k}, \\
 & | -\frac{c}{4kx^2}\\
\frac{v}{k}+1,\frac{1}{2}-\frac{\lambda}{2k}+\frac{v}{2k},1-\frac{\lambda}{2k}+\frac{v}{2k},\\
\end{array}
\right].
\end{multline}
\end{corollary}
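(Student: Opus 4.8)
The plan is to deduce this corollary from Theorem \ref{Theorem 3.4} by specializing $\beta=-\alpha$. When $\alpha+\beta=0$ the Gauss hypergeometric kernel $_2F_1(\alpha+\beta,-\eta;\alpha;\cdot)$ in (\ref{4}) is identically $1$, so the right-sided Saigo operator $I_{0-}^{\alpha,\beta,\eta}$ collapses to the right-sided Riemann--Liouville fractional integral and the parameter $\eta$ disappears --- both from the operator and, as the computation will show, from the right-hand side of (\ref{47}). First I would check that the hypotheses of Theorem \ref{Theorem 3.4} hold under the assumptions of the corollary: with $\beta=-\alpha$ the inequality $\mathfrak{R}(\lambda-v)<1+\min[\mathfrak{R}(\beta),\mathfrak{R}(\eta)]$ reduces, along its $\beta$-branch, to $\mathfrak{R}(\lambda-v)<1-\mathfrak{R}(\alpha)$, which together with $\mathfrak{R}(\alpha)>0$ is precisely $0<\mathfrak{R}(\alpha)<1-\mathfrak{R}(\lambda-v)$; and the non-degeneracy condition $\beta-\frac{\lambda}{k}+\frac{v}{k}+1\neq0,-1,\dots$ becomes $\frac{\lambda}{k}-\frac{v}{k}+\alpha\neq1,2,\dots$, the extra restriction listed in the corollary.

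Next I would substitute $\beta=-\alpha$ into the right-hand side of (\ref{47}). The power of $x$ becomes $x^{\frac{\lambda}{k}-\frac{v}{k}+\alpha-1}$. In the Gamma-function prefactor one has $\Gamma(\alpha+\beta+\eta-\frac{\lambda}{k}+\frac{v}{k}+1)=\Gamma(\eta-\frac{\lambda}{k}+\frac{v}{k}+1)$, which cancels the factor $\Gamma(\eta-\frac{\lambda}{k}+\frac{v}{k}+1)$ in the numerator, leaving $\frac{\Gamma(-\alpha-\frac{\lambda}{k}+\frac{v}{k}+1)}{\Gamma(1-\frac{\lambda}{k}+\frac{v}{k})\Gamma(\frac{v}{k}+1)}$, with all $\eta$-dependence gone.

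Finally, for the $_4F_5$ I would pass to its defining series (\ref{10}) and observe that setting $\beta=-\alpha$ forces the upper parameters $\frac{\eta+1}{2}-\frac{\lambda}{2k}+\frac{v}{2k}$ and $\frac{\eta+2}{2}-\frac{\lambda}{2k}+\frac{v}{2k}$ to coincide with the lower parameters carrying the combination $\alpha+\beta+\eta$; the matching Pochhammer symbols then cancel term by term, collapsing the $_4F_5$ to a $_2F_3$ with upper parameters $\frac{1-\alpha}{2}-\frac{\lambda}{2k}+\frac{v}{2k}$, $\frac{2-\alpha}{2}-\frac{\lambda}{2k}+\frac{v}{2k}$ and lower parameters $\frac{v}{k}+1$, $\frac{1}{2}-\frac{\lambda}{2k}+\frac{v}{2k}$, $1-\frac{\lambda}{2k}+\frac{v}{2k}$, which is exactly the series in the corollary. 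The work here is purely bookkeeping; the one point that needs care is checking that the operator and its image are genuinely $\eta$-free so that the Riemann--Liouville statement is well posed.
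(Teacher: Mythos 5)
Your proposal is correct and follows exactly the route the paper itself takes: the paper disposes of this corollary with the single remark that it ``follows from Theorem 3.4 in the case $\beta=-\alpha$,'' and your substitution, cancellation of the $\eta$-dependent Gamma factors and Pochhammer symbols, and verification of how the hypotheses specialize simply fill in the bookkeeping the paper omits. The only discrepancies are typographical on the paper's side (e.g.\ the stray $\beta$ in the first upper parameter of the $_2F_3$, which should read $\frac{-\alpha+1}{2}-\frac{\lambda}{2k}+\frac{v}{2k}$ as you obtain), not gaps in your argument.
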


\begin{corollary}\label{Corollary 3.6} 
Assume that $\alpha$, $\eta$, $\lambda$, $v\in\mathbb{C}$ and $k>0$ be such that
$\mathfrak{R}(v)>-1$, $\mathfrak{R}(\alpha)>0$, $\mathfrak{R}(\lambda+v)<1+\max[0,\mathfrak{R}(\eta)]$ and let $\frac{\lambda}{k}-\frac{v}{k}-\eta\neq1,2,\cdots$,
then the following formula holds:
\begin{eqnarray*}
\left(
  \begin{array}{c}
    K_{\eta,\alpha}^{-}t^{\frac{\lambda}{k}-1}W_{v,c}^{k}(\frac{1}{t}) \\
  \end{array}
\right)(x)&=&\frac{x^{\frac{\lambda}{k}-\frac{v}{k}-1}}{(2k)^{\frac{v}{k}}}
\frac{\Gamma(\eta-\frac{\lambda}{k}+\frac{v}{k}+1)}
{\Gamma(1-\frac{\lambda}{k}+\frac{v}{k})\Gamma(\frac{v}{k}+1)}
\end{eqnarray*}
\begin{multline}
\times{}_2F_{3}
\left[
\begin{array}{ccc}
  \frac{\eta+1}{2}-\frac{\lambda}{2k}+\frac{v}{2k}, \frac{\eta+2}{2}-\frac{\lambda}{2k}+\frac{v}{2k} \\
 &   & | -\frac{c}{4kx^2}\\
\frac{v}{k}+1,\frac{\alpha+\eta+1}{2}-\frac{\lambda}{k}+\frac{v}{k},\frac{\alpha+\eta+2}{2}-\frac{\lambda}{2k}+\frac{v}{2k} \\
\end{array}
\right].
\end{multline}
\end{corollary}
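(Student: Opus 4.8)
The plan is to deduce Corollary~\ref{Corollary 3.6} directly from Theorem~\ref{Theorem 3.4} by the specialization $\beta=0$, in the same spirit in which Corollary~\ref{Corollary 3.3} was obtained from Theorem~\ref{Theorem 3.1}. First I would invoke the definition (\ref{8}): with $\beta=0$ the right-sided Saigo operator reduces to the Erd\'elyi--Kober operator, $I_{0-}^{\alpha,0,\eta}=K_{\eta,\alpha}^{-}$, so that putting $\beta=0$ on the left of (\ref{47}) produces exactly $\left(K_{\eta,\alpha}^{-}t^{\frac{\lambda}{k}-1}W_{v,c}^{k}(\frac1t)\right)(x)$. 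I would also note that the hypotheses (\ref{46}) of Theorem~\ref{Theorem 3.4} specialize at $\beta=0$ to the conditions imposed in the corollary, and that the non-vanishing requirement $\eta-\frac{\lambda}{k}+\frac{v}{k}+1\neq 0,-1,\dots$ (equivalently $\frac{\lambda}{k}-\frac{v}{k}-\eta\neq 1,2,\dots$) is inherited from the corresponding hypothesis of Theorem~\ref{Theorem 3.4}.

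Next I would carry out the substitution $\beta=0$ in the right-hand side of (\ref{47}). In the ${}_4F_5$, the first two upper parameters become $\tfrac12-\tfrac{\lambda}{2k}+\tfrac{v}{2k}$ and $1-\tfrac{\lambda}{2k}+\tfrac{v}{2k}$, which are precisely the second and third lower parameters; cancelling these two coincident pairs collapses ${}_4F_5$ to a ${}_2F_3$ with upper parameters $\tfrac{\eta+1}{2}-\tfrac{\lambda}{2k}+\tfrac{v}{2k}$, $\tfrac{\eta+2}{2}-\tfrac{\lambda}{2k}+\tfrac{v}{2k}$ and lower parameters $\tfrac{v}{k}+1$, $\tfrac{\alpha+\eta+1}{2}-\tfrac{\lambda}{k}+\tfrac{v}{k}$, $\tfrac{\alpha+\eta+2}{2}-\tfrac{\lambda}{2k}+\tfrac{v}{2k}$, which is the series displayed in the corollary. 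For the gamma prefactor I would set $\beta=0$ and simplify, using $\Gamma(-\tfrac{\lambda}{k}+\tfrac{v}{k}+1)=\Gamma(1-\tfrac{\lambda}{k}+\tfrac{v}{k})$ to cancel the coincident factor; together with the power $x^{\frac{\lambda}{k}-\frac{v}{k}-1}/(2k)^{\frac{v}{k}}$ this yields the asserted closed form.

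As an independent check one could instead reprove the statement from first principles, mirroring the proof of Theorem~\ref{Theorem 3.4}: expand $W_{v,c}^{k}(\tfrac1t)$ termwise via (\ref{9}), apply the Erd\'elyi--Kober evaluation (\ref{16}) to each power $t^{\frac{\lambda+v}{k}-2n-1}$, rewrite $\Gamma_k(v+k+nk)$ in terms of $\Gamma$, and then use (\ref{29}) with $z=\tfrac{v}{k}+1$ together with the duplication formula (\ref{43}) to recast the resulting quotient of gamma functions as the Pochhammer symbols of a ${}_2F_3$; termwise integration is legitimate under (\ref{46}) with $\beta=0$ by the same absolute-convergence estimate used in Theorem~\ref{Theorem 3.4}. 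In either route the only real work is the bookkeeping --- keeping track of which upper parameters of ${}_4F_5$ annihilate which lower parameters once $\beta=0$, and simplifying the gamma-function prefactor correctly --- and there is no analytic obstacle beyond what Theorem~\ref{Theorem 3.4} already handles.
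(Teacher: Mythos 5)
Your proposal is exactly the paper's route: the authors dispose of Corollaries 3.5 and 3.6 with the single remark that they ``follow from Theorem 3.4 in respective cases $\beta=-\alpha$ and $\beta=0$,'' and your substitution $\beta=0$, the cancellation of the two coincident upper/lower parameter pairs in the ${}_4F_{5}$, and the identification $I_{0-}^{\alpha,0,\eta}=K_{\eta,\alpha}^{-}$ are precisely the omitted bookkeeping. Your correctly simplified gamma prefactor, $\Gamma(\eta-\tfrac{\lambda}{k}+\tfrac{v}{k}+1)/\bigl(\Gamma(\alpha+\eta-\tfrac{\lambda}{k}+\tfrac{v}{k}+1)\Gamma(\tfrac{v}{k}+1)\bigr)$, in fact exposes a typo in the printed corollary, whose denominator shows $\Gamma(1-\tfrac{\lambda}{k}+\tfrac{v}{k})$ instead of the factor $\Gamma(\alpha+\eta-\tfrac{\lambda}{k}+\tfrac{v}{k}+1)$ demanded by (\ref{16}).
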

 Corollary 3.5 and 3.6 follow from theorem 3.4 in respective cases $\beta=-\alpha$ and $\beta=0$.\\\\

%%%%%%%%%%%%%%%%%%%%%%%%%%%%%%%%%%%%%%%%%%%%%%%%%%%%%%%%%%%%%%%%%%%


\begin{thebibliography}{9}
\bibitem[1]{Diaz2007}R. D\'{a}az and E. Pariguan, On hypergeometric functions and Pochhammer k-symbol, Divulg. Mat. 15 (2007), no. 2, 179-192.
\bibitem[2]{Erd1953} A. Erd\'{e}lyi,W. Magnus, F. Oberhettinger, and F.G. Tricomi, Higher Transcendental Functions,Vol.1, McGraw-Hill, NewYork, Toronto, London, 1953.
\bibitem[3]{Erdelyi1953}A. Erd\'{e}lyi, W. Magnus, F, Higher Transcendental Functions, Vol.2, McGraw-Hill, NewYork, Toronto, London, 1953.
\bibitem[4]{Gehlot2014} K. S. Gehlot, Differential Equation of K -Bessels Function and its Properties, Nonl.
Analysis and Differential Equations, 2 (2014),,  no. 2,  61-67
%\bibitem[5]{Gehlot} K. S. Gehlot, Recurrence Relations of K -Bessels function Thai J. Math., to apear:
%http;//thaijmath.in.cmu.ac.th/index.php/thaijmath/article/view/1042/717.
\bibitem[6]{Gehlot2016} K. S. Gehlot and S. D. Purohit, Integral representations of the k-Bessel's function,
Honam Math. J. 38 (2016), no. 1, 17-23.
\bibitem[7]{Kilb2002} A.A. Kilbas, M. Saigo, and J.J. Trujillo, On the generalized Wright function, Fract. Calc. Appl. Anal. 54 (2002),
pp. 437–460.
\bibitem[8]{Kilbas2008}A.A. Kilbas, N. Sebastian, \emph{Generalized fractional integration of Bessel function of the
first kind}, Integral Transforms and Special Functions,19 (2008),No. 12,  869–883.
\bibitem[9]{Saigo1978} M. Saigo, A remark on integral operators involving the Gauss hypergeometric functions, Math. Rep. Kyushu Univ.,
11 (1978), pp. 135–143.
\bibitem[10]{Saiful2016}S.R. Mondal, Representation Formulae and Monotonicity of the Generalized k-Bessel Functions, arXiv:1611.07499 [math.CA], (2016).
\bibitem[11]{Samko1993} S.G. Samko, A.A. Kilbas, and O.I. Marichev, Fractional Integrals and Derivatives. Theory and Applications, Gordon
and Breach, Yverdon, 1993.
\bibitem[12]{Wright1935} E.M. Wright, The asymptotic expansion of the generalized hypergeometric functions, J. London. Math. Soc. 10
(1935), pp. 286–293.
\bibitem[13]{Wrig1935} E.M. Wright, The asymptotic expansion of integral functions defined by Taylor Series, Philos. Trans. Roy. Soc. London
A 238 (1940), pp. 423–451.
\bibitem[14] {Wrigt1935} E.M. Wright, The asymptotic expansion of the generalized hypergeometric function II, Proc. London. Math. Soc. 46(2)
(1935), pp. 389–408.

\end{thebibliography}
\end{document}